\newcommand{\nN}{{\mathbf{N}}}
\newcommand{\kK}{{\mathbf{K}}}
\numberwithin{equation}{section}
\newtheorem{thm}{Theorem}[section]
\newtheorem{cor}{Corollary}[section]
\newtheorem{lem}{Lemma}[section]
\newtheorem{prop}{Proposition}[section]
\newtheorem{assume}{Assumption}[section]
\theoremstyle{definition}
\newtheorem{rmk}[thm]{Remark}
\newcommand{\E}{\mathbb{E}}
\newcommand{\N}{\mathbb{N}}
\newcommand{\V}{\mathbb{V}}
\newcommand{\vep}{\varepsilon}
\newcommand{\K}{\mathbb{K}}
	\renewcommand{\P}{\mathbb{P}}
\begin{document}

\title{k-core in percolated dense graph sequences}
\author[]{Erhan Bayraktar} \address{Department of Mathematics, University of Michigan}
\email{erhan@umich.edu}
\author[]{Suman Chakraborty}
\address{Department of Mathematics and Computer Science, Eindhoven University of Technology}
\email{s.chakraborty1@tue.nl}
\author[]{Xin Zhang} 
\address{Department of Mathematics, University of Michigan}
\email{zxmars@umich.edu}
\keywords{Random graphs, branching process, $k$-core, Phase transition, dense graph limit, graphon, percolation.}
\subjclass[2020]{Primary: 05C80(Random graphs), 60C05(Combinatorial probability), 82B43(Percolation). }

\date{\today}
%\subjclass[2010]{Primary: 82B43, 05C80. }% Secondary: ;}
%\keywords{Percolation, graph bootstrap percolation, random graphs, bipartite graphs}

\maketitle

\begin{abstract}
We determine the size of $k$-core in a large class of dense graph sequences. Let $G_n$ be a sequence of undirected, $n$-vertex graphs with edge weights $\{a^n_{i,j}\}_{i,j \in [n]}$ that converges to a kernel $W:[0,1]^2\to [0,+\infty)$ in the cut metric. Keeping an edge $(i,j)$ of $G_n$ with probability $\min \{ {a^n_{i,j}}/{n},1 \}$ independently, we obtain a sequence of random graphs  $G_n(\frac{1}{n})$. Denote by $\mathcal{A}$ the property of a branching process that the initial particle has at least $k$ children, each of which has at least $k-1$ children, each of which has at least $k-1$ children, and so on. Using branching process and the theory of dense graph limits, under mild assumptions we obtain the size of $k$-core of random graphs $G_n(\frac{1}{n})$,
\begin{align*}
\text{size of $k$-core of } G_n\left(\frac{1}{n}\right) =n \mathbb{P}_{X^W}\left(\mathcal{A}\right) +o_p(n). 
\end{align*}
Our result can also be used to obtain the threshold of appearance of a $k$-core of order $n$. 
\end{abstract}

\section{Introduction}

For an integer $k\geq 2$, the $k$-core of a graph $G$ is the largest
induced subgraph of $G$ with the minimum degree at least $k$. It was first introduced by Bollob\'{a}s in \cite{MR777163} to find large $k$-connected subgraphs, and since then several studies have been devoted to investigating the existence and the size of $k$-core. Apart from theoretical interests, $k$-core has been applied to the study of social networks \cite{MR3379018, 10.1145/2588555.2610495}, graph visualizing \cite{10.5555/2976248.2976254, Carmi11150}, biology \cite{You:2013aa}. See also \cite{kong2019k} for an extensive discussion on its applications.   In the seminal paper \cite{pittel1996sudden}, Pittel, Spencer and Wormald determined the threshold for the appearance of a non-empty $k$-core in Bernoulli random graphs and uniform random graphs. The size of $k$-core have been studied in different random graph ensembles such as Bernoulli random graphs \cite{MR1120887},
uniformly chosen random graphs and hypergraphs with specified degree sequence \cite{cooper2004cores, fernholz2004cores,janson2007simple, janson2008asymptotic,  molloy2005cores}, Poisson cloning model \cite{kim2007poisson}, and the pairing-allocation model \cite{cain2006encores}. While almost all the previous work focused on $k$-core of homogeneous random graphs, Riordan \cite{MR2376426} determined the asymptotic size of $k$-core for a large class of inhomogeneous random graphs, which was introduced in \cite{MR2337396}.

In this article we study the asymptotic size of $k$-core in random subgraphs of convergent dense graph sequences. Let $G_n$ be a sequence of undirected weighted graphs on $n$ vertices with edge weights $\{a^n_{i,j}\}$ that converges to a graphon $W$. For some $c>0$, we keep an edge $(i,j)$ of $G_n$ with probability $\min \{ {ca^n_{i,j}}/{n},1 \}$ independently, and denote the resulting random graph by $G_n(\frac{c}{n})$. For any kernel $W$, we can associate it with a branching process $X^W$, i.e., the number of children of a particle with type $x$ has Poisson distribution with parameter $\int W(x,y) dy$ (see Section~\ref{sec2} for precise definition). Under some mild conditions, we show that 
\begin{align}\label{eq:eq}
\text{size of $k$-core of } G_n\left(\frac{c}{n}\right)= n\P_{X^{cW}}(\mathcal{A}) + o_p(n),
\end{align}
where $\mathcal{A}$ is the event that the initial particle has at least $k$ children, each of which has at least $k-1$ children, each of which has at least $k-1$ children, and so on. 

Our contribution is two-fold.  First, recall from \cite{lovasz2006limits} that every dense graph sequence has a convergent subsequence, and therefore our result applies to a very large class of dense graph sequences. In particular, our result together with \cite[Lemma 1.6]{bollobas2010cut} recovers the main result in  \cite{MR2376426} for bounded graphons. An important application of our result is quasi-random graphs (see e.g. \cite{chung1989quasi}), roughly, they are dense graph sequences that converge to a constant (non-zero) limit, such as Paley graphs (see  \cite{janson2011quasi}). It is worth pointing out that the construction of various quasi-random graphs involves very different techniques, ranging from finite geometries to linear algebra (see \cite{krivelevich2006pseudo}). This article provides a unified way to determine the size of the $k$-core in random subgraphs of such graphs up to the first-order term. Moreover, various quantities such as the size of the largest connected component in random subgraphs of quasi-random graphs have been studied recently \cite{nachmias2009mean, krivelevich2016phase, diskin2021site, chakraborty2017site}. As far as we know, other than the present work no result is known about the size of the $k$-core in random subgraphs of quasi-random graphs. Our result provides much more. There are plenty of examples of dense random graph models, which are not quasi-random that are known to converge to a bounded graphon (see \cite{basak2016large,bhamidi2018weighted,chatterjee2013estimating, chatterjee2011random}), where our result could be applicable. Second, as a byproduct of our proof of the main result, for any sequence of kernels $W_n$ satisfying some mild assumptions that converges to $W$ we have that 
\begin{align*}
\mathbb{P}_{X^{W_n}}(\mathcal{A}) \rightarrow \mathbb{P}_{X^{W}}(\mathcal{A}),
\end{align*}
a new continuity result concerning branching processes, which we believe is of independent interest. Even though the theory of graph limits received enormous attention in the last two decades, the only result alike that we can find is \cite[Theorem 1.9]{bollobas2010cut}, which concerns with the survival probability of a branching process.  \par
 Let us now point out the main difficulties and the ideas of our proof. Firstly, the proof technique in \cite{MR2376426} does not readily apply in our case.  For example, in this article, the proof of the upper bound of the size of $k$-core is done by upper bounding the size of the $k$-core in terms of the probability of the event $\mathcal{A}$. The approximation of the probability of the event $\mathcal{A}$ is subtle. We first approximate the probability of $\mathcal{A}$ in terms of homomorphism densities of appropriate subgraphs and then estimate this probability using the fact that homomorphism densities are continuous in cut metric; see e.g. \cite{MR2599196,lovasz2006limits}. The proof of the lower bound is more delicate. First, we approximate $W$ by a sequence of finitary kernels $F_m$ as in \cite{MR2337396}. Then, we show that for each fixed $m$, the branching process $X^{n}$ associated with $G_n$ contains $X^{(1-\vep_m)F_m}$ as a subset for some $\vep_m$ with $0<\vep_m<\frac{1}{m}$ when $n$ is large enough. To conclude the lower bound, we prove a continuity property, which is non-trivial and relies on the properties of the cut norm, and finally, we invoke a result (minor variant) from \cite{MR2376426}. 

The rest of the paper is organized as follows. In Section~\ref{sec2}, we present our main results with some discussions. In Section~\ref{sec:422pm10dec20} and Section~\ref{sec:446pm10dec20}, we prove the upper bound and lower bound of the size of $k$-core respectively.

\section{Main results and discussions}\label{sec2}

We now recall a few definitions to state our results. A graphon (or kernel) is defined to be a symmetric measurable function $W: I \times I \to [0,\infty)$, where  $I := [0,1]$. Take $\mathcal{W}$ to be the space of graphons. The cut norm of $W: I\times I \rightarrow \mathbb{R}$ (signed graphon) is defined by
$$\|W\|_\square := \sup_{S,T \in \mathcal{B}(I)} \left| \int_{S \times T} W(u,v)\,du\,dv \right|,$$
and the cut metric between two graphons $W_1$ and $W_2$ is defined by 
$$d_\square(W_1,W_2) := \|W_1-W_2\|_\square.$$
 An undirected finite graph $G_n$ with adjacency matrix $(a^n_{i,j})_{i,j=1}^n$ can be embedded into a symmetric kernel in a natural way
\begin{equation}\label{eqn:satapr11348pm}
	W_{G_n}(x,y) = \sum_{i,j=1}^n a^n_{i,j} {\mathbbm{1}}_{J_i^n}(x){\mathbbm{1}}_{J_j^n}(y),
\end{equation}
where $J_1^n = [0,\frac{1}{n}]$ and for $i= 2,3,\ldots,n$,  $J_i^n = \left(\frac{i-1}{n},\frac{i}{n}\right]$.

Let $G_n$ be a sequence of simple graphs on $n$ vertices with edge weights $\{a^n_{i,j}\}$ that converges to a kernel $W$. For some $c>0$, we keep an edge $(i,j)$ of $G_n$ with probability $\min \{ {ca^n_{i,j}}/{n},1 \}$ independently, and denote the resulting random graph by $G_n(\frac{c}{n})$. Here and throughout the paper we assume that edge weights $a^n_{i,j}$ are uniformly bounded by $\bar{a}>0$, and therefore for sufficiently large $n$ we will have $ \min \{ {ca^n_{i,j}}/{n},1 \} = {ca^n_{i,j}}/{n}$. Since retaining every edge independently is nothing but the bond-percolation on the graph , we call $G_n\left(\frac{c}{n}\right)$ percolated graph sequence (bond-percolation on arbitrary dense graph sequences was first studied in \cite{MR2599196}). Note that if we percolate on a dense graph sequence, where number of edges is of order $n^2$, the resulting graph becomes sparse, that is, it has order of $n$ many edges. Our aim is to study the size of the $k$-core of the random graph sequence $G_n\left(\frac{c}{n}\right)$. 

We will heavily use the branching process $X^W$  associated with the kernel $W$. The process starts with a single particle with type $x_0$, which is chosen uniformly from $[0,1]$. Conditional on generation $t$,  each member in generation $t$ has offsprings in the next generation independent of each other, and everything else. The number of children with types in a set $A \subset [0,1]$ is Poisson with parameter $\int_{A} W(x,y) \,dy$, and these numbers are independent for disjoint sets.\par

Let $\mathcal{A}_d$  be the event that the root has at least $k$ children, each of these $k$ children has at least $k-1$ children, each of those second generation of children has another $k-1$ children and so on until the $d$-th generation. Define $\mathcal{A}= \cap_{d=1}^{\infty} \mathcal{A}_d$. Let $C_k(G)$ denote the size of the $k$-core of a graph $G$. We are now ready to discuss our main result, which provides asymptotic size of the $k$-core in percolated dense graph sequences. As usual, if $A_n$ is a sequence of random variables, we write $A_n=o_p(f(n))$ if $A_n/f(n) \to 0$ in probability; if $A_n$ is a sequence of real numbers, we write $A_n=o(f(n))$ if $A_n/f(n) \to 0$. First let us make the following assumption.

\begin{assume}\label{assume2} $\lambda \to \P_{X^{\lambda W}}(\mathcal{A})$ is continuous at $\lambda =c$ with $c>0$.
\end{assume}

\begin{thm}\label{thm:554pm07dec20}
	Let $G_n$ be a  sequence of graphs with non-negative edge weights which are bounded above by a constant $\bar{a}>0$. Suppose that $G_n$ converges to a graphon $W$ as $n \rightarrow \infty$ and that the Assumption~\ref{assume2} holds. Then we have that 
\begin{equation}\label{eqn:thuapr9333pm}
		C_k\left(G_n\left(\frac{c}{n}\right)\right) =n \mathbb{P}_{X^{cW}}\left(\mathcal{A}\right) +o_p(n). 
\end{equation}
\end{thm}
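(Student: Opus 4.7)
The plan is to prove matching upper and lower bounds for $C_k(G_n(c/n))$ separately, following the roadmap sketched in the introduction.

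For the upper bound I would use the observation that any vertex $v$ lying in the $k$-core of $G_n(c/n)$ must, up to any fixed depth $d$, exhibit the local event $\mathcal{A}_d$: at least $k$ neighbors in the core, each of those at least $k-1$ further neighbors in the core, and so on for $d$ generations. This yields the deterministic bound
\[
C_k\!\left(G_n\!\left(\tfrac{c}{n}\right)\right) \le \sum_{v=1}^{n} \mathbbm{1}\bigl\{v \text{ has } \mathcal{A}_d \text{ locally in } G_n\!\left(\tfrac{c}{n}\right)\bigr\}.
\]
Taking expectations rewrites the right-hand side as a linear combination of expected counts of rooted trees of depth $d$ in the percolated graph. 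After pulling out the edge factors $(c/n)^{|E(T)|}$, each such expected count is, up to normalization, a subgraph homomorphism density of $G_n$. Continuity of homomorphism densities in the cut metric (see e.g.\ \cite{MR2599196,lovasz2006limits}) lets one pass to the limit $W$, and a direct computation identifies the limit with $n\,\P_{X^{cW}}(\mathcal{A}_d) + o(n)$. A second-moment/concentration argument upgrades this to convergence in probability. Since $\mathcal{A}_d \downarrow \mathcal{A}$, letting $d\to\infty$ (monotone convergence for the branching-process event, together with Assumption~\ref{assume2}) yields the upper bound.

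For the lower bound I would first approximate $W$ in cut norm by finitary step-function kernels $F_m$ with $\|W-F_m\|_\square \to 0$, as in \cite{MR2337396}. For each fixed $m$, the hypothesis $G_n\to W$ combined with this approximation allows one to couple $G_n(c/n)$ so that it contains, as a subgraph, an inhomogeneous Erd\H{o}s--R\'enyi random graph whose kernel is $(1-\vep_m)cF_m$ for some $\vep_m\in(0,1/m)$, once $n$ is sufficiently large; equivalently, the local exploration from a typical vertex stochastically dominates the branching process $X^{(1-\vep_m)cF_m}$. Since $F_m$ is a step kernel, a minor variant of Riordan's theorem \cite{MR2376426} yields
\[
C_k\!\left(G_n\!\left(\tfrac{c}{n}\right)\right) \ge n\,\P_{X^{(1-\vep_m)cF_m}}(\mathcal{A}) - o_p(n)
\]
for each fixed $m$. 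Passing $m\to\infty$ then requires the continuity statement $\P_{X^{(1-\vep_m)cF_m}}(\mathcal{A}) \to \P_{X^{cW}}(\mathcal{A})$, which combines cut-norm closeness with Assumption~\ref{assume2}.

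The main obstacle I expect is precisely this last continuity result, and indeed the authors flag it as a contribution of independent interest. The event $\mathcal{A}$ is an infinite-depth tail event, so one cannot directly appeal to homomorphism-density continuity, which is a finite-order object; a small cut-norm perturbation of $W$ could, a priori, tilt the Poisson offspring distributions enough to alter survival of the embedded $(k-1)$-ary tree. My plan is to first handle the truncated events $\mathcal{A}_d$, whose probabilities are finite iterated integrals of the kernel and therefore manifestly continuous under cut-norm convergence, and then control $|\P(\mathcal{A}_d)-\P(\mathcal{A})|$ uniformly in the approximating kernels by Poisson tail bounds together with monotonicity in the offspring intensity, with Assumption~\ref{assume2} pinning down the limit $d\to\infty$. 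Once this continuity is in hand, combining it with the upper and lower bounds above closes the gap and delivers \eqref{eqn:thuapr9333pm}.
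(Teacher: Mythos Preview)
Your overall architecture matches the paper's: upper bound via the local event $\mathcal{A}_d$ and homomorphism densities, lower bound via finitary approximation, stochastic domination of the associated branching process, an appeal to Riordan's result, and finally continuity of $\P_{X^{\cdot}}(\mathcal{A})$.

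Two points need correction. First, the inequality $C_k \le \sum_v \mathbbm{1}\{v\text{ has }\mathcal{A}_d\}$ is not a deterministic bound: a $k$-core vertex witnesses $\mathcal{A}_d$ as a \emph{tree with distinct vertices} only when its $d$-neighbourhood is acyclic. The paper adds the term $\sum_v\mathbbm{1}\{v\text{ lies on a cycle of length }\le 2d\}$, shows it is $o_p(n)$ by a first-moment bound, and only then passes to the Bernoulli (and then a slightly inflated Poisson) branching process. After this routine fix, your homomorphism-density route is exactly the paper's Section~\ref{sec:422pm10dec20}; concentration is handled there via a second-moment computation using the BKR inequality.

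The substantive gap is your plan for the continuity step: ``control $|\P(\mathcal{A}_d)-\P(\mathcal{A})|$ uniformly in the approximating kernels by Poisson tail bounds together with monotonicity''. That uniform-in-$n$ decay as $d\to\infty$ is essentially equivalent to what you are trying to prove, and no generic Poisson tail estimate delivers it---the rate at which $\P_{X^{W_n}}(\mathcal{A}_d)\downarrow\P_{X^{W_n}}(\mathcal{A})$ is governed by how far $W_n$ sits from criticality, which you do not control a priori. The paper proceeds differently. It first reduces to irreducible $W$, and then compares $X^{W_n}$ not to $X^W$ but to $X^{(1-\vep)W}$: writing $\beta_V(x,d)=\P(\mathcal{B}_d\mid X^V_0=x)$, one shows via the cut norm that, off a small bad set of types,
\[
\int W_n(x,y)\,\beta_n(y,d-1)\,dy \;\ge\; \int (1-\vep)W(x,y)\,\beta_{(1-\vep)W}(y,d-1)\,dy,
\]
and iterates in $d$. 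Irreducibility is essential here, since it forces $\beta_{(1-\vep)W}(\cdot)>0$ a.e., providing the strictly positive slack needed to absorb the $o(1)$ cut-norm error at each level of the recursion. This yields $\P_{X^{W_n}}(\mathcal{A})\ge \P_{X^{(1-\vep)W}}(\mathcal{A})-\vep^2$ directly for large $n$, and Assumption~\ref{assume2} enters only at the very end to send $\vep\to 0$. Relatedly, the paper approximates $W$ by finitary $F_m$ \emph{pointwise from below} rather than merely in cut norm; the inequality $F_m\le W$ is what makes the vertex-labelling and branching-process domination (your ``contains as a subgraph'') go through cleanly.
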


It suffices to prove the case $c=1$ in Theorem~\ref{thm:554pm07dec20}. To see this, let $G_n$ be a graph with edge weights $\{a^n_{i,j}\}$ and consider another graph $G_n'$ with edge weights $\{ca^n_{i,j} \}$. Therefore the random subgraphs $G_n\left(\frac{c}{n}\right)$ and $G_n'\left(\frac{1}{n}\right)$ are equal in distribution. Finally by our assumption $G_n$ converges to $W$ and this gives $G_n'$ converges to $cW$. The result  \eqref{eqn:thuapr9333pm} then follows from the result with $c=1$. 

Our proof of \eqref{eqn:thuapr9333pm} is divided into two parts, which will be given in the next two sections. We should remark that for the proof of upper bound, we only need the assumption that the edge weights of $G_n$ are uniformly bounded above by $\bar{a}$ and $G_n \to W$. Assumption~\ref{assume2} is used only in the proof of lower bound in Section~\ref{sec:446pm10dec20}.

\begin{rmk}\label{rmk:351pm16dec20}
In Theorem~\ref{thm:554pm07dec20}, note that $\mathbb{P}_{X^{cW}}\left(\mathcal{A}\right)$ could be zero and in that case we will only be able to say that there is no `giant' $k$-core (as usual by `giant' we mean `of size order of $n$'). From Theorem~\ref{thm:554pm07dec20} one can also obtain the emergence threshold for the giant $k$-core from the function $c \rightarrow \mathbb{P}_{X^{cW}}\left(\mathcal{A}\right)$. More precisely, if there is a point $c_0>0$ such that for $0\leq c< c_0$, $\mathbb{P}_{X^{cW}}\left(\mathcal{A}\right)=0$ and for $c>c_0$, $\mathbb{P}_{X^{cW}}\left(\mathcal{A}\right)>0$, then $c_0$ will be the threshold for the appearance of giant $k$-core. The other discontinuity points could be studied from this function as well.
\end{rmk}

\begin{rmk}
It is not difficult to show that the function $\lambda\to\mathbb{P}_{X^{\lambda W}}\left(\mathcal{A}\right)$ is continuous from above (see Section 3.2 in \cite{MR2376426}), therefore we only assume that the function is continuous from below. We now discuss why it is not possible to get rid of the Assumption \ref{assume2} in Theorem~\ref{thm:554pm07dec20}. Firstly, let $c_k$ be the threshold of the appearance of a $k$-core in the binomial random graph on $n$ vertices with edge probability $\tfrac{c}{n}$. Then from \cite[Theorem 1.3 (ii)]{janson2008asymptotic} we get that the assertion in Theorem~\ref{thm:554pm07dec20} does not hold at the threshold (discontinuity point), which tells that the Assumption \ref{assume2} is optimal. More precisely, at the threshold the $k$-core is empty with probability bounded away from $0$ and $1$ for large $n$.  
 \par
 Secondly, there could be more than one discontinuity points, and they could appear in different (non-trivial) ways. Let us now roughly explain how such discontinuities could appear and refer the interested reader to the end of Section 3.1 in \cite{MR2376426} for the details. Consider the following graphon 
\begin{align*}
 W(x,y)=
\begin{cases} 
 2000 \;\; &(x,y) \in [0,1/2]^2 \\
 2  \;\;\;\;\;\;\;\; &(x,y) \in (1/2,1]^2\\
  \frac{1}{100} \;\; &\text{otherwise}.
\end{cases}
\end{align*}

It is not difficult to show that the $3$-core first appears in the subgraph induced by the vertices that correspond to the bottom left part ($[0,1/2]^2$) of the graphon, and this emerges near $\tfrac{c_3}{1000}$, where $c=c_3$ is the threshold of appearance of $3$-core in the binomial random graph on $n$ vertices with edge probability $\tfrac{c}{n}$. It could also be shown that $\P_{X^{\lambda W}}(\mathcal{A})$ has another discontinuity near $\lambda=c_3$, that is when the subgraph induced by the vertices that correspond to the top right part ($(1/2,1]^2$) of the graphon will have a $3$-core. There is another more straightforward way in which discontinuity could appear in $\P_{X^{\lambda W}}(\mathcal{A})$. Consider the graphon   
\begin{align*}
 W(x,y)=
\begin{cases} 
2 \;\; &(x,y) \in [0,1/2]^2 \\
 1  \;\;\;\;\;\;\;\; &(x,y) \in (1/2,1]^2\\
0 \;\; &\text{otherwise}.
\end{cases}
\end{align*}
It is easy to see that the emergence threshold of the $k$-core in the subgraph induced by the vertices that correspond to the bottom left ($[0,1/2]^2$) part of the graphon, and the subgraph induced by the vertices that correspond to the top right part ($(1/2,1]^2$) of the graphon will be different.
  
\end{rmk}

As a byproduct in the proof of Theorem~\ref{thm:554pm07dec20}, we also obtain a result regarding branching processes that might be of independent interest.   
\begin{prop}\label{thm:546pm09dec20}
Let $W_n$ be a sequence of graphons such that $d_\square(W_n,W) \rightarrow 0$. Also suppose $\lambda \to \P_{X^{\lambda W}}(\mathcal{A})$ is continuous from below at $\lambda =1$. Then it holds that 
\begin{equation}\label{eqn:545pm09dec20}
	\mathbb{P}_{X^{W_n}}(\mathcal{A}) \rightarrow \mathbb{P}_{X^{W}}(\mathcal{A}),
\end{equation}
as $n\rightarrow \infty$.
\end{prop}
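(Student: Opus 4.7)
The plan is to establish matching upper and lower bounds for $\P_{X^{W_n}}(\mathcal{A})$ relative to $\P_{X^W}(\mathcal{A})$, which together give \eqref{eqn:545pm09dec20}.

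\emph{Upper bound} ($\limsup_n \P_{X^{W_n}}(\mathcal{A}) \leq \P_{X^W}(\mathcal{A})$). Since $\mathcal{A} = \bigcap_{d\geq 1} \mathcal{A}_d$ with $\mathcal{A}_{d+1} \subset \mathcal{A}_d$, we have $\P_{X^W}(\mathcal{A}_d) \downarrow \P_{X^W}(\mathcal{A})$ for any fixed $W$. It therefore suffices to show that for every fixed $d$, $\P_{X^{W_n}}(\mathcal{A}_d) \to \P_{X^W}(\mathcal{A}_d)$. Using the Poisson branching recursion $r_0^W \equiv 1$, $r_j^W(x) = \mathbb{P}\bigl(\mathrm{Poi}\bigl(\int_0^1 W(x,y) r_{j-1}^W(y)\,dy\bigr) \geq k-1\bigr)$, one gets $\P_{X^W}(\mathcal{A}_d) = \int_0^1 \mathbb{P}\bigl(\mathrm{Poi}\bigl(\int_0^1 W(x,y) r_{d-1}^W(y)\,dy\bigr) \geq k\bigr)\,dx$. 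Because $W$ and the $W_n$ are bounded and the Poisson tail $\mu \mapsto \mathbb{P}(\mathrm{Poi}(\mu) \geq j)$ is real-analytic, a uniform Taylor truncation approximates $\P_{X^W}(\mathcal{A}_d)$ arbitrarily well by a finite linear combination of homomorphism densities $t(T,W)$ of rooted trees $T$ of depth at most $d$. By the standard counting lemma for bounded graphons, each $t(T,\cdot)$ is continuous in the cut metric, yielding $\P_{X^{W_n}}(\mathcal{A}_d) \to \P_{X^W}(\mathcal{A}_d)$. Letting $d \to \infty$ gives the upper bound, and this step does not use Assumption~\ref{assume2}.

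\emph{Lower bound} ($\liminf_n \P_{X^{W_n}}(\mathcal{A}) \geq \P_{X^W}(\mathcal{A})$). Here left-continuity of $\lambda \mapsto \P_{X^{\lambda W}}(\mathcal{A})$ at $\lambda=1$ is essential, because $\mathcal{A}$ is a decreasing limit and its probability can jump in $W$. Given $\eta>0$, use left-continuity to choose $\vep \in (0,1)$ so that $\P_{X^{(1-\vep)W}}(\mathcal{A}) > \P_{X^W}(\mathcal{A}) - \eta$. Approximate $W$ by a finitary (step-function) kernel $F$ coming from a weak-regularity partition $\{A_1,\ldots,A_m\}$, writing $F = \sum c_{ij}\mathbbm{1}_{A_i \times A_j}$ with $\|W-F\|_\square$ arbitrarily small. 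For such $F$, the branching process $X^{(1-\vep)F}$ is a multitype Galton--Watson process with $m$ types and mean matrix $((1-\vep)c_{ij}|A_j|)$, and $\P_{X^{(1-\vep)F}}(\mathcal{A})$ is a continuous function of the finitely many parameters (a direct finite-dimensional fixed-point analysis on the recursion for $r_j$). Since $\|W_n - W\|_\square \to 0$ and conditional expectation contracts the cut norm, the cell averages of $W_n$ on $A_i \times A_j$ converge to those of $W$. A Poisson thinning then couples $X^{(1-\vep)F}$ into $X^{W_n}$ for all $n$ sufficiently large (the $\vep$-slack absorbs the small cell-average deviation between $W_n$ and $F$), giving $\liminf_n \P_{X^{W_n}}(\mathcal{A}) \geq \P_{X^{(1-\vep)F}}(\mathcal{A})$. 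Refining the partition sends $\P_{X^{(1-\vep)F}}(\mathcal{A}) \to \P_{X^{(1-\vep)W}}(\mathcal{A}) > \P_{X^W}(\mathcal{A}) - \eta$, and letting $\eta \downarrow 0$ closes the bound.

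\emph{Main obstacle.} The lower bound is the delicate part. Cut-norm convergence is too weak to yield pointwise or even $L^1$ comparison between $W_n$ and $W$, so no direct stochastic domination of branching processes is available. The workaround is to descend to finitary kernels---where the process reduces to a finite-type Galton--Watson tree and cell averages under $W_n$ behave well---at the cost of a small multiplicative $\vep$-loss that can be absorbed only via left-continuity of $\lambda \mapsto \P_{X^{\lambda W}}(\mathcal{A})$ at $\lambda=1$. Without Assumption~\ref{assume2}, this loss could never be closed at the critical values of $W$ discussed in the remarks following Theorem~\ref{thm:554pm07dec20}.
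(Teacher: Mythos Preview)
Your upper bound is correct and matches the paper's route (Proposition~\ref{prop:branchingeq}): expand $\P_{X^W}(\mathcal{A}_d)$ as an absolutely convergent series of tree homomorphism densities, invoke cut-metric continuity of each $t(T,\cdot)$ together with a dominated-convergence tail bound, then let $d\to\infty$.

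The lower bound, however, has a genuine gap at exactly the point you label the ``main obstacle.'' Convergence of the cell averages $\frac{1}{|A_i||A_j|}\int_{A_i\times A_j} W_n$ to $c_{ij}$ does \emph{not} license a Poisson-thinning embedding of $X^{(1-\vep)F}$ into $X^{W_n}$: such a coupling would require $\int_{A_j} W_n(x,y)\,dy \ge (1-\vep)c_{ij}|A_j|$ for almost every $x\in A_i$, whereas cut-norm control only gives this \emph{on average} over $x\in A_i$. The function $x\mapsto\int_{A_j}W_n(x,y)\,dy$ can fall well below $(1-\vep)c_{ij}|A_j|$ on a set of small but positive measure, and once the branching process lands on such a type the domination breaks down for all subsequent generations. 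Your closing step is also circular: ``refining the partition sends $\P_{X^{(1-\vep)F}}(\mathcal{A})\to\P_{X^{(1-\vep)W}}(\mathcal{A})$'' is precisely the proposition itself applied to the sequence $(1-\vep)F_m\to(1-\vep)W$; the finite-dimensional continuity you cite handles perturbations of the $c_{ij}$ for a \emph{fixed} number of cells, not the limit as the number of cells grows.

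The paper's lower bound (Proposition~\ref{prop:continuity}) avoids both finitary approximation and any global coupling. It works directly with the depth-$d$ recursion $\beta_n(x,d)=\Psi_{k-1}\bigl(\int W_n(x,y)\beta_n(y,d-1)\,dy\bigr)$ and proves by induction on $d$ that $\beta_n(x,d)\ge\beta_{(1-\vep)W}(x,d)$ for all $x$ outside a bad set of arbitrarily small measure. The inductive step rests on a cut-norm comparison (Lemma~\ref{lem:jul111:559pm}): for any strictly positive weight $\alpha$, the set $\{x:\int\alpha(y)W_n(x,y)\,dy<(1-\vep/2)\int\alpha(y)W(x,y)\,dy\}$ has measure $o(1)$. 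Strict positivity of $\alpha=\beta_{(1-\vep)W}(\cdot,d-1)$ is supplied by irreducibility of $W$ (Lemma~\ref{lem:nonzero}), and the contribution of the bad set from depth $d-1$ is absorbed into the remaining $\vep/2$ slack because $\int W(x,y)\beta_{(1-\vep)W}(y,d-1)\,dy$ is uniformly bounded below on the good set. Integrating and sending $d\to\infty$ yields $\P_{X^{W_n}}(\mathcal{A})\ge\P_{X^{(1-\vep)W}}(\mathcal{A})-\vep^2$, after which left-continuity at $\lambda=1$ closes the argument just as you intended.
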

\begin{proof}
It is proved in Propositions ~\ref{prop:branchingeq},~\ref{prop:continuity}.
\end{proof}

Let us point out that Proposition~\ref{thm:546pm09dec20}  has the following important consequence. Note that the function $\lambda \to \P_{X^{\lambda W}}(\mathcal{A})$ is non-decreasing, and therefore it can have at most countably many discontinuity points. Hence for almost every positive $c$, the next corollary provides a way to approximate the size of $k$-core using only $G_n$.

\begin{cor}\label{cor:736pm09dec20}
 	Let $G_n$ be a  sequence of graphs with non-negative edge weights which are bounded above by a constant $\bar{a}>0$. Suppose that $G_n$ converges to a graphon $W$ as $n \rightarrow \infty$, where $\lambda \to \P_{X^{\lambda W}}(\mathcal{A})$ is continuous at $\lambda =c$, then
 
 \begin{equation}\label{eqn:912pm09dec20}
 	C_k\left(G_n\left(\frac{c}{n}\right)\right) =n \mathbb{P}_{X^{cW_{G_n}}}\left(\mathcal{A}\right) +o_p(n). 
 \end{equation}
\end{cor}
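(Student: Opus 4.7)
The plan is to derive Corollary~\ref{cor:736pm09dec20} as an immediate combination of Theorem~\ref{thm:554pm07dec20} with the continuity statement in Proposition~\ref{thm:546pm09dec20}. Theorem~\ref{thm:554pm07dec20}, under exactly the stated hypotheses (boundedness of edge weights, $G_n \to W$, continuity of $\lambda \mapsto \mathbb{P}_{X^{\lambda W}}(\mathcal{A})$ at $\lambda=c$), already yields
\begin{equation*}
C_k\!\left(G_n\!\left(\tfrac{c}{n}\right)\right) \;=\; n\,\mathbb{P}_{X^{cW}}(\mathcal{A}) \;+\; o_p(n).
\end{equation*}
Thus the corollary will follow once I know that the deterministic numerical sequence $\mathbb{P}_{X^{cW_{G_n}}}(\mathcal{A})$ tends to $\mathbb{P}_{X^{cW}}(\mathcal{A})$; indeed the difference $n\bigl[\mathbb{P}_{X^{cW_{G_n}}}(\mathcal{A})-\mathbb{P}_{X^{cW}}(\mathcal{A})\bigr]$ will then be $o(n)$, hence absorbed into the $o_p(n)$ error.

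To obtain this numerical convergence I would apply Proposition~\ref{thm:546pm09dec20} to the graphon sequence $cW_{G_n}$ with limit $cW$. The assumed convergence $G_n \to W$ in cut metric means $d_\square(W_{G_n},W)\to 0$ after a suitable measure-preserving rearrangement of $[0,1]$; since the law of the branching process $X^W$ (and therefore $\mathbb{P}_{X^W}(\mathcal{A})$) is invariant under such rearrangements, there is no loss of generality in assuming $d_\square(W_{G_n},W)\to 0$ directly. Positive homogeneity of the cut norm then yields $d_\square(cW_{G_n},cW)=c\,d_\square(W_{G_n},W)\to 0$. For the continuity hypothesis of Proposition~\ref{thm:546pm09dec20}, note that $\mu\mapsto\mathbb{P}_{X^{\mu(cW)}}(\mathcal{A})=\mathbb{P}_{X^{(\mu c)W}}(\mathcal{A})$ is continuous from below at $\mu=1$ precisely when $\lambda\mapsto\mathbb{P}_{X^{\lambda W}}(\mathcal{A})$ is continuous from below at $\lambda=c$, which is implied by our hypothesis of continuity there. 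Hence Proposition~\ref{thm:546pm09dec20} delivers $\mathbb{P}_{X^{cW_{G_n}}}(\mathcal{A})\to\mathbb{P}_{X^{cW}}(\mathcal{A})$, and substituting this into the display above finishes the proof.

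There is essentially no genuine obstacle: the corollary is a short deduction from the two already-established results. Its only content beyond Theorem~\ref{thm:554pm07dec20} is the observation that, by the cut-norm continuity of $W\mapsto\mathbb{P}_{X^{W}}(\mathcal{A})$, one may replace the (possibly unknown) limiting kernel $W$ by the explicit kernel $W_{G_n}$ on the right-hand side, making the first-order asymptotic expression computable from $G_n$ itself. The only mild care required is matching the continuity hypothesis across the multiplicative scaling by $c$, and handling the standard ambiguity up to measure-preserving rearrangements — both harmless since $\mathbb{P}_{X^{\cdot}}(\mathcal{A})$ is invariant under rearrangements.
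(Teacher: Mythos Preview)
Your proposal is correct and follows exactly the paper's approach: the paper's proof is the single line ``The proof is immediate using Theorem~\ref{thm:554pm07dec20} and Proposition~\ref{thm:546pm09dec20},'' and you have simply spelled out this combination in detail. The extra care you take with the scaling by $c$ and the measure-preserving rearrangement is fine (and in fact the latter is unnecessary here, since in this paper $d_\square$ is defined as the cut norm itself rather than the cut distance, so $G_n\to W$ already means $\|W_{G_n}-W\|_\square\to 0$).
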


\begin{proof}[Proof of Corollary \ref{cor:736pm09dec20}]
The proof is immediate using Theorem~\ref{thm:554pm07dec20} and Proposition~\ref{thm:546pm09dec20}.
\end{proof}

\section{Proof of the upper bound in Theorem~\ref{thm:554pm07dec20}}\label{sec:422pm10dec20}
We will first prove the upper bound, i.e., 
\begin{align*}
C_k\left(G_n\left(\frac{1}{n}\right)\right)  \leq n \mathbb{P}_{X^W}\left(\mathcal{A}\right) +o_p(n). 
\end{align*} 
The idea is as follows: if a vertex $v$ of a graph is in the $k$-core, then for any $d>0$ either $v$ has property $\mathcal{A}_d$ or $v$ is contained in a cycle of length smaller than $2d$. Since the probability of occurrence of  short cycles is small for large enough $n$, the probability that $v$ is in the $k$-core is bounded above by the probability of having property $\mathcal{A}_d$. Therefore to prove the upper bound, we explicitly calculate the probability of event $\mathcal{A}_d$ using homomorphism density, and a tightness argument. Finally, by letting $d \to \infty$, we obtain that $C_k\left(G_n\left(\frac{1}{n}\right)\right)  \leq n \mathbb{P}_{X^W}\left(\mathcal{A}\right) +o_p(n)$. Note that we do not need the  limit $W$ to be bounded below by a constant or the continuity assumption for the upper bound.

Let us construct a branching process $^{*}X^{n}$ associated with the random graph $G_n\left(\frac{1}{n}\right)$.  $^*X^n$ has $n$-types of offsprings $1,2,\ldots,n$. It starts with a single particle whose type is chosen uniformly from $1, 2,\ldots, n$. Conditioning on generation $t$, each member of generation $t$ has offsprings in the next generation independent of each other, and everything else. The number of $j$-offspring of a particle of type $i$ is Bernoulli$(a^n_{i,j}/n)$. 

We will also use another branching process where number of $j$-offsprings of a particle of type $i$ is Poisson$(a^n_{i,j}\rho_n)$, where $\rho_n \geq \frac{1}{n}$ is to be determined. We denote this process by $X^{n,\rho_n}$ (simply by $X^n$ if $\rho_n=\frac{1}{n}$). By taking $\rho_n=\frac{1}{n-\bar{a}}$, the Poisson branching process $X^{n, \rho_n}$ stochastically dominates, in the first order, $^* X^n$ for $n>3\bar{a}$. To see this, it is sufficient to show the following inequality for any $i,j \in [n]$ 
$$
\mathbb{P}\left(\text{Poisson}\left( a_{i,j}^n \rho_n\right) >t\right) \geq \mathbb{P}\left(\text{Bernoulli}\left(\frac{a_{i,j}^n}{n}\right)>t\right)
$$
It is trivial for $t\geq 1$ and $t<0$. We need to check only for $t=0$. It can be easily verified that the above inequality is equivalent to 
$$
\frac{n \rho_n \left(1-e^{-a_{i,j}^n \rho_n }\right)}{a_{i,j}^n \rho_n}  \geq 1.
$$
For $n > 3 \bar{a}$, we have that $a_{i,j}^n \rho_n = \frac{a_{i,j}^n}{n-\bar{a}} < 1/2$, and hence according to the Taylor expansion of $e^{-a_{i,j}^n \rho_n}$,
\begin{align*}
\frac{n \rho_n \left(1-e^{-a_{i,j}^n \rho_n }\right)}{a_{i,j}^n \rho_n} > n \rho_n (1- a_{i,j}^n \rho_n /2 ) \geq (1+\bar{a}\rho_n )(1-\bar{a}\rho_n /2) \geq 1. 
\end{align*}

Note that  we can write
$$
C_k\left(G_n\left(\frac{1}{n}\right)\right) = \sum_{v \in [n]} {\mathbbm{1}}\left\{v \in  \text{$k$-core of }G_n\left(\frac{1}{n}\right)\right\}
$$
If a vertex $v$ is in the $k$-core, then one of the two things must be true:
\begin{enumerate}[(i)]
\item $v$ is in a cycle within the $d$-neighborhood  of $v$ (this implies $v$ is in a cycle of length at most $2d$); 
\item Starting from $v$ there is a tree such that $v$ has $k$ neighbors, each of these $k$ neighbors has at least $k-1$ neighbors and this happens up to generation $d$. In this case we will call vertex $v$ has property $\mathcal{A}_d$.  
\end{enumerate}
Therefore 

\begin{align}\label{eq:upperbound}
C_k\left(G_n\left(\frac{1}{n}\right)\right) \leq & \sum_{v \in [n]} {\mathbbm{1}}\left\{v \text{  is in a cycle of length at most } 2d\right\} \notag \\ &+ \sum_{v \in [n]} {\mathbbm{1}}\left\{v \text{  has property  } \mathcal{A}_d\right\} \notag \\
=& \text{Term I}+\text{Term II}.
\end{align}
Let $V_n$ be an uniform random variable on $\{1,2,\ldots,n\}$ independent of everything else. Then according to our construction, 
\begin{align}\label{eqn:frimay8124pm}
\E ( \text{Term II}) &\leq n\P\left(^{*}X^{n} \text{  with root }V_n  \text{  has property  } \mathcal{A}_d\right) \notag \\
& \leq n\P\left(X^{n,\rho_n} \text{  with root }V_n  \text{  has property  } \mathcal{A}_d\right). 
\end{align}

Before presenting our first proposition, we state an auxiliary result, the BKR inequality (see e.g. \cite{MR1703130}). Consider a product space $\Omega$ of finite sets $\Omega_1, \dotso, \Omega_k$, 
\begin{align*}
\Omega = \Omega_1 \times \dotso \times \Omega_k. 
\end{align*}
Let $\mathcal{F}= 2^{\Omega}$, and $\mu$ be a product of $k$ probability measures $\mu_1, \dotso, \mu_k$. For any configuration $\omega=(\omega_1, \dotso, \omega_k) \in \Omega$, and any subset $S$ of $[k]:=\{1,\dotso,k\}$, we define the cylinder $[\omega]_S$ by 
\begin{align*}
[\omega]_S:= \{\hat{\omega}: \hat{\omega}_i = \omega_i, \,  \forall i \in S \}.
\end{align*}
For any two subsets $A, B \subset \Omega$, define 
\begin{align*}
A \circ B:= \{\omega: \text{there exists some } S=S(\omega) \subset [k] \text{ such that } [\omega]_S \subset A, \, [\omega]_{S^c} \subset B\}.
\end{align*}

\begin{lem}\label{lem:BK}
For any product space $\Omega$ of finite sets, product probability measure $\mu$ on $\Omega$ and $A, B \subset \Omega$, we have the inequality
\begin{align*}
\mu(A \circ B) \leq \mu(A) \mu(B).
\end{align*}
\end{lem}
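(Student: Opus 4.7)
The plan is to deduce Lemma \ref{lem:BK} from Reimer's theorem, which is the version of the BKR inequality for arbitrary (not necessarily monotone) events on a Boolean product space. The argument splits naturally into two stages: a standard reduction from an arbitrary product of finite sets to the Boolean cube, followed by an invocation of Reimer's combinatorial inequality on $\{0,1\}^N$.

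For the first stage, I would encode each finite factor $\Omega_i$ by a product of $|\Omega_i|$ indicator Bernoulli variables $\mathbbm{1}_{\omega_i=\alpha}$, one for each $\alpha \in \Omega_i$, under the constraint that exactly one indicator is $1$. Slightly more economically one can use $\lceil \log_2 |\Omega_i| \rceil$ Bernoullis via a binary encoding. Either way, after relabeling coordinates the original product space $\Omega$ is replaced by $\{0,1\}^N$ for some $N$, the product measure $\mu$ becomes a product Bernoulli measure on $\{0,1\}^N$, and the events $A,B$ become subsets of $\{0,1\}^N$. The key point is that the coordinates of $\{0,1\}^N$ inherit a block partition $I_1 \sqcup \cdots \sqcup I_k$ corresponding to the original factors, and disjoint occurrence $A \circ B$ on $\Omega$ translates faithfully into disjoint occurrence with respect to this block partition on $\{0,1\}^N$; since that is a weaker requirement than disjoint occurrence on individual coordinates, passing to the Boolean cube only enlarges $A \circ B$, and it suffices to prove the inequality in the coordinate-wise Boolean setting.

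For the second stage, in the Boolean cube with product Bernoulli measure, I would invoke Reimer's theorem: $\mu(A \circ B) \le \mu(A)\mu(B)$ for arbitrary $A,B \subset \{0,1\}^N$. Its proof proceeds by first establishing the counting inequality $2^N |A \circ B| \le |A| \cdot |B|$ under the uniform measure, via a clever shadow/box argument in which, for each configuration $\omega$ and witnessing set $S \subset [N]$, the two cylinders $[\omega]_S$ and $[\omega]_{S^c}$ are expanded into an appropriately chosen subset of $A \times B$ in an injective way. The extension to general Bernoulli parameters follows by a standard tensorization of the uniform inequality against appropriately weighted copies.

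The main obstacle is Reimer's shadow lemma itself, which is genuinely non-trivial even in the Boolean case and is not amenable to a short FKG-style or coupling-style argument (such arguments only handle monotone events, which is insufficient here because the events arising from cycle counts and neighborhood structure in $G_n(1/n)$ are not monotone). Rather than reproducing Reimer's proof, I would simply cite \cite{MR1703130}; the reduction step described above is the only part one needs to spell out in the body of the paper, and it is routine once the Boolean statement is granted.
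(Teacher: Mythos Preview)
The paper does not prove this lemma; it merely states it as a known result with a citation to Reimer \cite{MR1703130}. In that sense your proposal, which also ultimately cites Reimer, is aligned with the paper's treatment. The extra content you add is the reduction from a general finite product to $\{0,1\}^N$.

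That reduction, however, has a genuine gap. Encoding $\Omega_i$ by the indicators $\mathbbm{1}_{\omega_i=\alpha}$ under the constraint ``exactly one is $1$'', or by the bits of a binary expansion, does \emph{not} push $\mu_i$ forward to a product Bernoulli measure on the new coordinates: the bits within a block are dependent unless $\mu_i$ is very special. So the hypothesis of Reimer's theorem (product measure on $\{0,1\}^N$) is not met by your encoding. The standard repair is to first approximate each $\mu_i$ by rational probabilities with a common denominator $M_i$, replace $\Omega_i$ by $\{1,\dots,M_i\}$ with the \emph{uniform} measure (grouping outcomes appropriately), and then binary-encode the uniform space---uniform bits on $\{0,1\}^{\lceil\log_2 M_i\rceil}$ are genuinely independent. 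A routine limiting argument handles irrational probabilities. With this correction your block-versus-coordinate containment $A\circ_{\mathrm{block}}B\subset A\circ_{\mathrm{coord}}B$ is valid and the sketch goes through.

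One side remark: your claim that the events used in the paper ``are not monotone'' is incorrect. Both $B_v$ (root has property $\mathcal{A}_d$) and $C_v$ (lies on a short cycle) are increasing in the edge set, so the original van den Berg--Kesten inequality for increasing events would already suffice for the applications in the paper; the full strength of Reimer is not actually needed here.
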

In this paper, to apply the BKR inequality, we always take $\Omega^n_{i,j}=\{0,1\}, \,  i \not= j  \in \{1, \dotso, n\},$ and $\Omega = \prod_{i \not= j \in [n]} \Omega^n_{i,j}$.  Then $\omega^n_{i,j}=1$ represents that the node $i$ and $j$ are linked in the random graph $G_n \left(\frac{1}{n} \right)$. According to our construction, we also have $\mu_i(\{1\})= \min \{ a^n_{i,j}/n, 1\}$.

\begin{prop}\label{prop:frijun121237pm}
Let $G_n$ be a  sequence of graphs with non-negative edge weights which are bounded above by a constant $\bar{a}>0$. Then for any fixed $d$, it holds that 
$$C_k\left(G_n\left(\frac{1}{n}\right)\right) \leq n\P\left(X^{n,\rho_n} \text{  with root }V_n  \text{  has property  } \mathcal{A}_d\right)
+o_p(n).$$
\end{prop}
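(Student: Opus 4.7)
The plan is to combine the decomposition \eqref{eq:upperbound}, $C_k(G_n(\tfrac{1}{n})) \leq \text{Term I} + \text{Term II}$, with the coupling bound \eqref{eqn:frimay8124pm}, $\E(\text{Term II}) \leq n\,\P(X^{n,\rho_n} \text{ with root } V_n \text{ has } \mathcal{A}_d)$, so the proposition reduces to proving $\text{Term I} = o_p(n)$ and that Term II concentrates around its expectation at scale $o_p(n)$.

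For Term I a routine first-moment calculation suffices. The expected number of $\ell$-cycles in $G_n(\tfrac{1}{n})$ is at most $\tfrac{n^\ell}{2\ell}\,(\bar a/n)^\ell = \bar a^\ell/(2\ell)$, and summing the vertex contributions over $3 \leq \ell \leq 2d$ yields $\E(\text{Term I}) \leq \sum_{\ell=3}^{2d} \bar a^\ell/2 = O_d(1)$; Markov's inequality then gives $\text{Term I} = o_p(n)$.

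The main obstacle is showing that Term II concentrates around its mean, and I would attack this by a second moment estimate using the BKR inequality (Lemma~\ref{lem:BK}). Set $B_v = \{v \text{ has property } \mathcal{A}_d\}$, so $B_v$ is a monotone increasing event in the edge variables whose minimal witnesses are depth-$d$ rooted subtrees of $G_n(\tfrac{1}{n})$ at $v$ with the prescribed branching pattern; every such witness uses only edges within graph distance $d$ of $v$ in $G_n(\tfrac{1}{n})$. Hence whenever $\mathrm{dist}_{G_n(1/n)}(u,v) > 2d$, any witnesses for $B_u$ and $B_v$ are vertex-disjoint, giving $B_u \cap B_v \subseteq B_u \circ B_v$ on that event. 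This yields $\P(B_u \cap B_v) \leq \P(B_u \circ B_v) + \P(\mathrm{dist}_{G_n(1/n)}(u,v) \leq 2d)$, and the BKR inequality further bounds $\P(B_u \circ B_v) \leq \P(B_u)\,\P(B_v)$.

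To finish, a path-counting first-moment estimate shows that the probability of a length-$\ell$ path connecting two fixed vertices in $G_n(\tfrac{1}{n})$ is at most $n^{\ell-1}(\bar a/n)^\ell = \bar a^\ell/n$, so $\sum_{u \neq v} \P(\mathrm{dist}_{G_n(1/n)}(u,v) \leq 2d) = O_d(n)$. Combined with the diagonal term $\sum_u \P(B_u) \leq \E(\text{Term II}) \leq n$, one obtains $\V(\text{Term II}) \leq \E(\text{Term II}) + O_d(n) = O_d(n) = o(n^2)$, and Chebyshev's inequality therefore gives $|\text{Term II} - \E(\text{Term II})| = o_p(n)$. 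Combining with the Term I bound completes the proof.
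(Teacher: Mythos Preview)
Your proof is correct and follows essentially the same approach as the paper: bound Term~I by a first-moment cycle count, and control Term~II via a second-moment/BKR argument using the fact that witnesses for $B_u$ and $B_v$ are edge-disjoint whenever $\mathrm{dist}(u,v)>2d$. The paper additionally runs a second-moment computation for Term~I, but your observation that $\E(\text{Term I})=O_d(1)$ already yields $\text{Term I}=o_p(n)$ via Markov makes that step unnecessary; otherwise the arguments coincide.
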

\begin{proof}
According to \eqref{eq:upperbound} and \eqref{eqn:frimay8124pm}, it suffices to show that $$\text{Term II}= \E(\text{Term II})+o_p(n), \quad \text{and} \quad \text{Term I}=o_p(n).$$ In the first two steps, we show the concentration of $\text{Term II}$ by computing its variance, and in the last step prove that $\text{Term I}$ is small. 

\vspace{6pt}
\noindent\textbf{Step I:} For any two independently and uniformly chosen vertices $U$ and $V$ of $G_n\left(\frac{1}{n}\right)$,  
$$
\P(d(U,V)\leq 2d) = \frac{1}{n^2}\sum_{u,v \in [n]}\P(d(u,v)\leq 2d) = o(1),
$$
where $d$ is the graph distance. To see this, note that $d(U,V)\leq 2d$ implies there is a path from $U$ to $V$ of length at most $2d$. Thus 
$$
\P(d(U,V)\leq 2d) \leq \sum_{i=1}^{2d}\P(\#\{\text{paths of length  }i\text{  from  }U\text{  to  } V\} \geq 1)
$$
Using Markov's inequality we get
$$
\P(d(U,V)\leq 2d) \leq \frac{1}{n^2}\sum_{u,v \in [n]}\sum_{i=1}^{2d}\E(\#\{\text{paths of length  }i\text{  from  }u\text{  to  } v\})
$$
We can get a crude upper bound as 
$$
\P(d(U,V)\leq 2d) \leq \frac{1}{n^2}\sum_{u,v \in [n]}\sum_{i=1}^{2d}n^{i-1}\left( \frac{\bar{a}}{n}\right)^i =o(1).
$$

\vspace{6pt}
\noindent\textbf{Step II:} 
Let $G^d_n[v]$ be the subgraph of $G_n\left( \frac{1}{n} \right)$ formed by the vertices within distance $d$ of $v \in [n]$, and define $B_{v}=\{  \text{root $v$ has property } \mathcal{A}_d \text { in $G^d_n[v]$}\}$. It can be easily verified that
\begin{align}\label{eqn:frimay8149pm}
\E({\text{Term II}}^2) =&  \sum_{v, v' \in [n]} \P\left(\text{root }v \text{  and  } v'  \text{  has property  } \mathcal{A}_d \right)  \notag \\
=& \sum_{ v \in [n]} \P(B_v)+ \sum_{ v \not = v'} \P(B_v \cap B_{v'})
\end{align}

For two different vertices $v$ and $v'$, we break the probability in two parts,
\begin{align}\label{eqn:monmay11114pm}
  \P (B_v \cap B_{v'})=&  \P(B_v \cap B_{v'}, d(v,v') \leq 2d)  + \P(B_v \cap B_{v'}, d(v,v')> 2d ).
\end{align}
For the second term on the right of  \eqref{eqn:monmay11114pm}, it can be easily seen that $$\{ d(v,v') > 2d\} \cap B_{v} \cap B_{v'} \subset \{ d(v,v') > 2d\}  \cap B_v \circ B_{v'}.$$
Therefore we get that 
\begin{align*}
\P(B_v \cap B_{v'}) &= \P\left( d(v,v')\leq 2d\right) 
+ \P\left(B_{v} \circ B_{v'} , d(v,v')> 2d\right)  \notag \\
& \leq \P\left( d(v,v')\leq 2d\right) + \P\left(B_{v} \circ B_{v'} \right).
\end{align*}
Now since $B_{v}$ and $B_{v'}$ are increasing events, according to Lemma~\ref{lem:BK} we obtain that 
\begin{equation}\label{eqn:monmay11131pm}
\P\left(B_{v} \cap B_{v'}\right)  \leq \P\left( d(v,v')\leq 2d\right) + \P(B_{v})\P(B_{v'}).
\end{equation}
Combining \eqref{eqn:monmay11131pm} and \eqref{eqn:frimay8149pm} we get 
\begin{equation}
\E({\text{Term II}}^2) \leq  n^2\P\left( d(U,V)\leq 2d\right) + \left(\E\left(\text{Term II}\right)\right)^2+\sum_{v \in [n]} \left( \P(B_v)-\P(B_v)^2 \right).
\end{equation}
Therefore using \textbf{Step I} we get $\V({\text{Term II}}) = o(n^2)$. Now using Markov's inequality we conclude that $\text{Term II}= \E(\text{Term II})+o_p(n)$.

\vspace{6pt}
\noindent\textbf{Step III:} Let us denote $C_{v} := \{v \text{  is in a cycle of length at most } 2d \}$. The first moment of the $\text{Term I}$ is given by
\begin{equation}
\sum_{v \in [n]} \P\left(C_v\right) \leq n \sum_{l=3}^{2d} \frac{(n-1)!}{(n-l)!} \left(\frac{\bar{a}}{n} \right)^{l} \leq \sum_{l=3}^{2d} \bar{a}^{l} =o(n).
\end{equation}

For the second moment, note that 
\begin{align*}
\E(\text{Term I}^2) =&\sum_{v \in [n]} \P(C_v)+  \sum_{v \not = v'} \P(C_v \cap C_{v'}).
\end{align*}
For two different vertices $v$, $v'$, the probability can be written as 
\begin{align*}
\P( C_v \cap C_{v'} ) = \P(C_v \cap C_{v'}, d(v,v') > 2d)+\P(C_v \cap C_{v'}, d(v,v') \leq 2d),
\end{align*}
and therefore 
\begin{align*}
\P( C_v \cap C_{v'} ) \leq \P(C_v \cap C_{v'}, d(v,v') > 2d)+\P( d(v,v') \leq 2d).
\end{align*}
Note that 
\begin{align*}
\{ d(v,v') > 2d\} \cap C_v \cap C_{v'}   \subset \{ d(v,v') > 2d\} \cap C_v \circ C_{v'}. 
\end{align*}
Therefore according to Lemma~\ref{lem:BK}, we obtain that 
\begin{align*}
\P(C_v \cap C_{v'} ) &\leq \P(C_{v} \circ C_{v'}, d(v,v') > 2d) +\P(d(v,v') \leq 2d) \\
&   \leq  \P(C_{v} \circ C_{v'}) +\P(d(v,v') \leq 2d)  \\
&\leq \P(C_{v})\P( C_{v'}) +\P(d(v,v') \leq 2d).
\end{align*}
Now summing over all $v, v' \in [n]$ and using \textbf{Step I}, we get

\begin{align*}
\E(\text{Term I}^2) = \E(\text{Term I})^2 +o(n^2) 
\end{align*}
We can conclude our result by using Markov's inequality.
\end{proof}

\subsection{Recursive formula}
Let us first introduce some notation. For any graphon $W$, we denote the initial particle of its associated branching process $X^W$ by $X^W_0$, and the first generation by $X^W_{\{1\}},\dotso,X^W_{\{N(W)_0\}}$, where $N(W)_0$ is the number of offsprings of $X^W_0$. For each element in the $d$-th generation, we denote it by $X^W_{\{i_1|i_2| \dotso |i_d\} }$ if he is the $i_d$-th child of $X^W_{\{i_1|i_2| \dotso |i_{d-1}\} }$. Denote the number of offsprings of $X^W_{\{i_1|i_2| \dotso |i_d\} }$ by $N(W)_{\{i_1|i_2| \dotso |i_d\} }$, and the type of $X^W_{\{i_1|i_2| \dotso |i_d\} }$ by $T(W)_{\{i_1|i_2| \dotso |i_d\} }$. Define the collection of offspring numbers in the first $d$ generations by
\begin{align*}
\nN(W)^d:=\{ N(W)_0\} \cup \dotso \cup \{ N(W)_{\{i_1|i_2| \dotso |i_d\}}:  i_j \leq N(W)_{\{i_1|i_2| \dotso |i_{j-1}\}}, j=1,\dotso,d \},
\end{align*}
and the collection of offspring numbers of $X^W_{\{i\}}$, $1\leq i \leq N(W)_0$ by 
\begin{align*}
\nN(W)^d_{\{i\}}:= \{N(W)_{\{i\}}\} \cup \dotso \cup \{ N(W)_{\{i|i_2| \dotso |i_d\}}:  i_j \leq N(W)_{\{i_1|i_2| \dotso |i_{j-1}\}}, j=2, \dotso, d\}.
\end{align*}
Denote the realizations of random variables $\nN(W)^d$ and $ \nN(W)^d_{\{i\}}$ by $\kK^d$ and $ \kK^d_{\{i\}}$ respectively, and especially denote the realization of $N(W)_0$ by $k_0$. Define functions 
\begin{align*}
g(x, \kK^d):=\P( \nN(W)^d= \kK^d\,|\, T(W)_0 =x).
\end{align*}
It is clear that 
\begin{align*}
\P( \nN(W)^d= \kK^d)=\int g(x, \kK^d) \, dx.
\end{align*}

\begin{prop}\label{prop:recursive2}
We have that 
\begin{align}\label{eq:recursive2}
g(x, \kK^d)= \frac{e^{-\int W(x,y)  dy}}{ k_0!} \prod_{j=1}^{k_0} \left( \int W(x,y)  g(y,\kK^d_{\{j\}}) \, dy\right).
\end{align}
\end{prop}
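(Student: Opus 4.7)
The plan is to derive \eqref{eq:recursive2} by conditioning on the first generation of $X^W$, using the explicit description of the children of the root as a Poisson point process together with the branching (independence) property. Write $\lambda(x):=\int W(x,y)\,dy$. By construction, given $T(W)_0=x$ one has $N(W)_0\sim\mathrm{Poisson}(\lambda(x))$ and, conditional on $N(W)_0=k_0$, the labeled types $T(W)_{\{1\}},\dots,T(W)_{\{k_0\}}$ are i.i.d.\ with density $W(x,\cdot)/\lambda(x)$. Multiplying these together, the joint density of $(k_0,y_1,\dots,y_{k_0})$ is
$$\frac{e^{-\lambda(x)}\lambda(x)^{k_0}}{k_0!}\prod_{j=1}^{k_0}\frac{W(x,y_j)}{\lambda(x)}=\frac{e^{-\lambda(x)}}{k_0!}\prod_{j=1}^{k_0}W(x,y_j).$$

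Next I would invoke the branching property: conditional on the first generation, the subtrees rooted at $X^W_{\{1\}},\dots,X^W_{\{k_0\}}$ are mutually independent, and the subtree rooted at $X^W_{\{j\}}$ is itself distributed as the branching process $X^W$ started from a particle of type $T(W)_{\{j\}}=y_j$. Consequently, the events $\{\nN(W)^d_{\{j\}}=\kK^d_{\{j\}}\}$ decouple across $j$, and by the very definition of $g$ each one contributes $g(y_j,\kK^d_{\{j\}})$ to the conditional probability.

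Combining these two ingredients via the tower property and integrating the product of the two densities over $y_1,\dots,y_{k_0}\in[0,1]$, Fubini factors the integrand into a product of one-dimensional integrals, giving
$$g(x,\kK^d)=\frac{e^{-\lambda(x)}}{k_0!}\prod_{j=1}^{k_0}\int W(x,y)\,g(y,\kK^d_{\{j\}})\,dy,$$
which is exactly \eqref{eq:recursive2}.

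The main conceptual care needed is the bookkeeping of labeled versus unlabeled offspring: the specified configuration $\kK^d$ fixes both the root's offspring count $k_0$ and, for each label $j\le k_0$, the entire descendant configuration $\kK^d_{\{j\}}$, so the sum over possible first-generation sizes collapses to the single term $N(W)_0=k_0$ and the $1/k_0!$ from the Poisson pmf is preserved intact. Once the labeling convention is matched between the two sides, no analytic subtleties beyond Fubini and nonnegativity of $W$ remain, and the identity follows from a routine Poisson point process calculation.
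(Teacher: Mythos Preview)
Your proof is correct and follows essentially the same approach as the paper's own proof: both condition on the first generation $(N(W)_0,T(W)_{\{1\}},\dots,T(W)_{\{k_0\}})$, use that the types are i.i.d.\ with density $W(x,\cdot)/\lambda(x)$ given $N(W)_0=k_0$, invoke the branching property to factor the subtree probabilities as $\prod_j g(y_j,\kK^d_{\{j\}})$, and integrate out the $y_j$'s. The only cosmetic difference is that the paper writes the computation as $g(x,k_0)$ times an integral involving the conditional type density, whereas you combine these into the joint density $\frac{e^{-\lambda(x)}}{k_0!}\prod_j W(x,y_j)$ up front; the algebra is identical.
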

\begin{proof}
It can be easily seen that $g(x, k_0)=\frac{1}{k_0!}e^{-\int W(x,y) dy }\left(\int W(x,y) dy \right)^{k_0}$. For $d \geq 1$, we get that 
\begin{align*}
g(x,\K^d)=& \P(\nN(W)^d =\kK^d\,| \,T(W)_0=x) \\
=&\P(N(W)_0=k_0\,|\, T(W)_0=x) \\
& \times \P(\nN(W)^d_{\{j\}}=  \kK^d_{\{j\}}, j=1, \dotso,k_0 \,|\,N(W)_0=k_0,T(W)_0=x    ) \\
= &g(x,k_0) \int_{y_1} \dotso \int_{y_{k_0}} \prod_{j=1}^{k_0} g(y_j, \kK^d_{\{j\}}) \, \P(T(W)_{\{j\}} \in dy_j\,|\, N(W)_0=k_0, T(W)_0=x) .
\end{align*}
In conjunction with the equation
\begin{align*}
\prod_{j=1}^{k_0} \P(T(W)_{\{j\}} \in d y_j\,|\, N(W)_0 =k_0, T(W)_0=i) = \frac{\prod_{j=1}^{k_0} W(x, y_j) \, dy_j }{ (\int_y W(x,y) \, dy)^{k_0} },
\end{align*}
we obtain the recursive formula
\begin{align*}
g(x, \kK^d)= \frac{e^{-\int W(x,y) dy}}{ k_0!} \prod_{j=1}^{k_0} \left( \int W(x,y)  g(y,\kK^d_{\{j\}}) \, dy\right).
\end{align*}
\end{proof}

\subsection{Convergence}
Let $W_n$ be a sequence of graphons such that $d_\square(W_n,W) \to 0$  and $$\sup_{n,x,y} W_n(x,y) \leq \bar{a}$$ for some positive constant $\bar{a}$. Let $X^n$ be the associated branching process of $W_n$, and $$g_n(x,\kK^d)=\P( \nN(W_n)^d= \kK^d\,|\, T(W_n)_0 =x).$$

We want to show that as $n \to \infty$
\begin{align*}
\int g_n(x,\kK^d) \, dx \to \int g(x,\kK^d) \, dx. 
\end{align*}
To see this, for any graphon $W$, any finite tree $T$ with root $0$, any $x \in [0,1]$, we define the vertex prescribed homomorphism density
\begin{align*}
t^x(T,W)= \int_{[0,1]^{|V(T)|-1} } \prod_{0i \in E(T)} W(x,x_i) \prod_{ij \in E(T), i,j \geq 1}W(x_i,x_j)  \, dx_1 \dotso dx_{|V(T)|-1},
\end{align*}
and the homomorphism density
\begin{align*}
t(T,W)=\int_{[0,1]} t^x(T,W) \, dx.
\end{align*}
It is well-known that for finite $T$, $t(T,W_n) \to t(T,W)$ as long as $d_\square(W_n,W) \to 0$; see e.g. \cite{borgs2008convergent, borgs2012convergent,lovasz2006limits}. We will rewrite $\int g_n(x,\kK^d) \, dx $ and $\int g(x, \kK^d) \,dx)$ as $\sum_{m \geq 0} \lambda_m t(T_m,W_n)$ and $\sum_{m \geq 0} \lambda_m t(T_m,W)$ respectively for a sequence of trees $T_m$. 

\begin{prop}\label{prop:tree}
Suppose $W$ is a graphon such that $\sup_{x,y} W(x,y) \leq \bar{a}$. Then for any $d \in \N$ and any configuration $\kK^d$, there exists a sequence of finite trees $(T_m)_{m \geq 0}$, and a sequence of real numbers $(\lambda_m)_{m \geq 0}$ such that 
\begin{enumerate}[(i)]
\item $\sum_{m \geq 0} |\lambda_m| \, \bar{a}^{|E(T_m)|}<+\infty$ ;
\item $g(x,\kK^d)=\sum_{m \geq 0} \lambda_m \, t^x(T_m,W)$.
\end{enumerate}
\end{prop}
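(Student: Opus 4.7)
The plan is to prove Proposition~\ref{prop:tree} by induction on the depth $d$, using the recursion \eqref{eq:recursive2} together with two elementary identities for the vertex-prescribed homomorphism density $t^x(\cdot, W)$. The first identity is \emph{grafting at the root}: for any rooted tree $T$ with root labelled $0$,
\begin{align*}
\int_0^1 W(x,y)\, t^y(T, W)\, dy \,=\, t^x(T^{\uparrow}, W),
\end{align*}
where $T^{\uparrow}$ is obtained from $T$ by adding a new root that is joined by one edge to the old root. The second is \emph{gluing at the root}: for rooted trees $T_1, \dots, T_r$,
\begin{align*}
\prod_{j=1}^r t^x(T_j, W) \,=\, t^x(T_1 \vee \dots \vee T_r, W),
\end{align*}
where $T_1 \vee \dots \vee T_r$ is the tree obtained by identifying the roots. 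Both identities are immediate from the integral definition of $t^x$, since the non-root integration variables of the different trees are disjoint.

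For the base case $d = 0$, the configuration is just $\kK^0 = k_0$ and $g(x, k_0) = \tfrac{1}{k_0!}\, e^{-\int W(x,y)\,dy}\, (\int W(x,y)\,dy)^{k_0}$. Expanding the exponential as a power series and recognizing $(\int W(x,y)\, dy)^{n+k_0} = t^x(S_{n+k_0}, W)$ for the star $S_{n+k_0}$ of $n+k_0$ leaves rooted at its center, I obtain
\begin{align*}
g(x, k_0) \,=\, \sum_{n \geq 0} \frac{(-1)^n}{n!\, k_0!}\, t^x(S_{n+k_0}, W),
\end{align*}
and the bound $\sum_{n \geq 0} \tfrac{\bar{a}^{n+k_0}}{n!\, k_0!} = \tfrac{\bar{a}^{k_0} e^{\bar{a}}}{k_0!} < +\infty$ verifies condition (i).

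For the inductive step, I apply the hypothesis to each factor $g(y, \kK^d_{\{j\}})$ in the product on the right of \eqref{eq:recursive2}, obtaining representations $g(y, \kK^d_{\{j\}}) = \sum_m \lambda^{(j)}_m\, t^y(T^{(j)}_m, W)$ with $\sum_m |\lambda^{(j)}_m|\, \bar{a}^{|E(T^{(j)}_m)|} < +\infty$. The grafting identity, together with an interchange of sum and integral justified by this absolute bound, converts each $\int W(x,y)\, g(y, \kK^d_{\{j\}})\, dy$ into a series $\sum_m \lambda^{(j)}_m\, t^x((T^{(j)}_m)^{\uparrow}, W)$. Expanding the product over $j = 1, \dots, k_0$ as a $k_0$-fold multi-indexed series and applying the gluing identity produces a series of the form $\sum t^x(T, W)$ where $T$ is formed by gluing the $(T^{(j)}_{m_j})^{\uparrow}$ at the common root $x$. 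Multiplying by $\tfrac{1}{k_0!}\, e^{-\int W(x,y)\,dy}$, expanding the exponential as a power series, and using the gluing identity once more to attach an additional star of $n$ leaves at $x$ yields the claimed representation for $g(x, \kK^d)$. The absolute convergence compounds multiplicatively: the new sum is dominated by
\begin{align*}
\frac{e^{\bar{a}}}{k_0!}\, \prod_{j=1}^{k_0} \Bigl( \bar{a}\, \sum_m |\lambda^{(j)}_m|\, \bar{a}^{|E(T^{(j)}_m)|} \Bigr) \,<\, +\infty,
\end{align*}
which closes the induction.

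The main obstacle is bookkeeping. The expansion is multi-indexed, so I must verify that the edge counts of the glued trees combine correctly (the new edge created by each grafting contributes the factor $\bar{a}$ above, and the star contributes an $e^{\bar{a}}$ factor) so that condition (i) propagates cleanly from one depth to the next; and I must check that each Fubini/Tonelli interchange between the series and the integral against $W(x,y)$ is warranted by the uniform absolute convergence. Beyond this, the entire argument reduces to the two root-based identities and the power-series expansion of the exponential.
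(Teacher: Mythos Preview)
Your proof is correct and follows essentially the same approach as the paper: induction on $d$, with the base case handled by expanding the exponential and recognizing stars, and the inductive step by applying the recursion \eqref{eq:recursive2}, the induction hypothesis, and assembling the resulting tree by attaching the grafted subtrees $(T^{(j)}_{m_j})^{\uparrow}$ and an $m_0$-star at a common root (exactly the tree in the paper's Figure~\ref{fig:tree}). Your explicit naming of the grafting and gluing identities and your remark on the Fubini justification are minor expository additions, but the argument and the absolute-convergence bound are the same.
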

\begin{proof}
Let us prove by induction. For $d=0$, we have that 
\begin{align*}
g(x,k)= \frac{1}{k!}e^{-\int W(x,y) dy }\left(\int W(x,y) dy \right)^k= \frac{1}{k!} \sum_{m=0} \frac{(-1)^m}{m!} \left(\int W(x,y) dy \right)^{m+k}.
\end{align*}
For any $m \in \N$, take $T_m$ to be an $(m+k)$-star, i.e., a tree of height $1$ with $(m+k)$ leaves. Define $\lambda_m: =\frac{(-1)^m}{k!m!}$. Then it can be easily seen that $$\sum_{m \geq 0} |\lambda_m| \, \bar{a}^{|E(T_m)|}=\sum_{m \geq 0} \frac{\bar{a}^{t+k}}{k!m!} <+\infty,$$ and $$g(x,k)=\sum_{ m \geq 0} \lambda_m \,t^x(T_m,W). $$
Now suppose that our claim is true for any configuration $\kK^{d-1}$. According to our recursive formulas  \eqref{eq:recursive2}, we expand the exponential term and obtain that 
\begin{align*}
g(x, \kK^d)&= \frac{1}{k_0!} \sum_{m \geq 0} \frac{(-1)^m}{m!} \left(\int W(x,y) dy\right)^m\prod_{j=1}^{k_0} \left( \int W(x,y) \, g(y,\kK^d_{\{j\}}) \, dy\right).
\end{align*}

\begin{figure}

\begin{tikzpicture}
  \node {root}
    child {node {$1$}}
    child {node {$\dotso$}}
    child {node{$m_0$}}
    child{ node {$T^1_{m_1}$}}
    child {node {$\dotso$}}
    child{ node {$T^{k_0}_{m_{k_0}}$}};
\end{tikzpicture}
\caption{Tree $T_m$}
\label{fig:tree}
\end{figure}
For each $\kK^d_{\{j\}}, j=1,\dotso, k_0$, we have sequences $(\lambda_m^j)_{m \geq 0}, (T^j_m)_{m \geq 0}$ such that our claim is satisfied. For each $m=(m_0, m_1, \dotso, m_{k_0}) \in \mathbb{N}^{k_0+1}$, we define $\lambda_m=\frac{(-1)^{m_0}}{k_0! m_0!} \prod_{j=1}^{k_0} \lambda_{m_j}^{j},$ and tree $T_m$ as in Figure~\ref{fig:tree}. It is then clear that 
\begin{align*}
\sum\limits_{m \in \N^{k_0+1}} |\lambda_m| \, \bar{a}^{|E(T_m)|} \leq  \sum_{m_0 \in \N} \frac{\bar{a}^{k_0+m_0}}{k_0! m_0!} \prod_{j=1}^{k_0} \left(\sum_{m_j \in \N} |\lambda_{m_j}^{j}| \, \bar{a}^{|E(T_{m_j}^j)|} \right) < +\infty.
\end{align*}
According to our induction, we have that 
\begin{align*}
g(y,\kK^d_{\{j\}}) = \sum_{m_j \geq 0} \lambda_{m_j}^{j}t^y(T^j_{m_j},W). 
\end{align*}
Therefore, we obtain that 
\begin{align*}
g(x, \kK^d)&=\sum\limits_{m \in \N^{k_0+1}} \lambda_m \left(\int W(x,y) dy\right)^{m_0} \prod_{j=1}^{k_0} \left(\int W(x,y)\, t^y(T_{m_j}^j, W) \,dy \right).
\end{align*}
It can be easily verified that for each $m \in \N^{k_0+1}$,
\begin{align*}
t^x(T_m, \kK^d) =\left(\int W(x,y) dy\right)^{m_0} \prod_{j=1}^{k_0} \left(\int W(x,y)\, t^y(T_{m_j}^j, W) \,dy \right).
\end{align*}
Thus, we conclude that $$ g(x,\kK^d)= \sum_{m \in \N^{k_0+1}} \lambda_m \, t^x(T_m,W).$$
\end{proof}

\begin{prop}\label{prop:convergence} Suppose  $W_n$ is a sequence of graphons such that $d_\square(W_n,W) \to 0$, and satisfying $\sup_{n,x,y} W_n(x,y) \leq \bar{a}$ for some positive constant $\bar{a}$. Then it holds that
\begin{align*}
\lim\limits_{n \to \infty} \P(\nN(W_n)^d=\kK^d)=\P(\nN(W)^d=\kK^d) .
\end{align*}
\end{prop}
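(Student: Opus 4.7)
The plan is to leverage Proposition~\ref{prop:tree} to write both $\P(\nN(W_n)^d=\kK^d)$ and $\P(\nN(W)^d=\kK^d)$ as absolutely convergent series of homomorphism densities of a common sequence of finite trees, and then pass to the limit term-by-term using the fact that homomorphism densities of fixed finite graphs are continuous functionals on graphons equipped with the cut metric.

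The first thing I would do is inspect the construction of $(T_m, \lambda_m)$ in the proof of Proposition~\ref{prop:tree} and observe that both the trees $T_m$ and the scalars $\lambda_m$ are defined purely in terms of the configuration $\kK^d$: the base case tree is an $(m+k)$-star and the inductive tree is assembled from sub-trees of sub-configurations, with the scalars given by explicit factorials and signs. The graphon enters only when one evaluates $t^x(T_m,\cdot)$, and the uniform bound $\bar{a}$ is used only in the summability estimate (i). Since $\sup_{n,x,y}W_n(x,y)\le \bar{a}$, the same sequence $(T_m,\lambda_m)$ therefore gives simultaneously the decompositions
\begin{align*}
g_n(x,\kK^d) = \sum_{m\geq 0}\lambda_m\, t^x(T_m,W_n),\qquad g(x,\kK^d) = \sum_{m\geq 0}\lambda_m\, t^x(T_m,W),
\end{align*}
both converging absolutely. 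Using the pointwise bound $|t^x(T_m,W_n)|,|t^x(T_m,W)|\le \bar{a}^{|E(T_m)|}$ together with (i), Fubini/dominated convergence allows me to integrate in $x$ term-by-term to obtain
\begin{align*}
\P(\nN(W_n)^d=\kK^d)=\sum_{m\geq 0}\lambda_m\, t(T_m,W_n),\qquad \P(\nN(W)^d=\kK^d)=\sum_{m\geq 0}\lambda_m\, t(T_m,W).
\end{align*}

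Next I would invoke the standard fact that for any fixed finite graph (here a tree) $T_m$, the map $W\mapsto t(T_m,W)$ is continuous in the cut metric (see \cite{borgs2008convergent,lovasz2006limits}); since $d_\square(W_n,W)\to 0$, we get $t(T_m,W_n)\to t(T_m,W)$ for every $m$. Finally, using the summable majorant $|\lambda_m t(T_m,W_n)|\le |\lambda_m|\bar{a}^{|E(T_m)|}$ supplied by (i) of Proposition~\ref{prop:tree}, the dominated convergence theorem for counting measure justifies exchanging the limit with the outer sum, yielding $\sum_m \lambda_m t(T_m,W_n)\to \sum_m \lambda_m t(T_m,W)$ and hence the claim.

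The only real obstacle is the first step, namely making sure that the coefficients and trees from Proposition~\ref{prop:tree} do not secretly depend on the graphon, so that a single decomposition can be applied simultaneously to $W_n$ and to $W$. Once this is settled, everything else is a clean application of continuity of homomorphism densities together with dominated convergence against the explicit $\bar{a}$-dependent envelope.
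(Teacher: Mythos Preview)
Your proposal is correct and follows exactly the paper's own argument: apply Proposition~\ref{prop:tree} to express both probabilities as $\sum_m \lambda_m\, t(T_m,\cdot)$ with the same graphon-independent $(T_m,\lambda_m)$, use continuity of homomorphism densities under $d_\square$, and pass to the limit via dominated convergence against the envelope $|\lambda_m|\,\bar{a}^{|E(T_m)|}$. Your explicit check that $(T_m,\lambda_m)$ depend only on $\kK^d$ and your justification of the $x$-integration/summation interchange are in fact slightly more careful than the paper's terse version.
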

\begin{proof}
According to Proposition~\ref{prop:tree}, we get that
\begin{align*}
& \P(\nN(W_n)^d=\kK^d)=\int g_n(x, \kK^d)\, dx= \sum_{m \geq 1} \lambda_m \, t(T_m,W_n), \\
& \P(\nN(W)^d=\kK^d) =\int g(x, \kK^d) \, dx = \sum_{m \geq 1} \lambda_m \, t(T_m,W).
\end{align*}
Since $W_n$ converges to $W$ in that cut norm, we have that $t(T_m,W_n) \to t(T_m, W)$ as $n \to \infty$. Due to the uniform bound $$\sum_{m \geq 1} \lambda_m \, t(T_m,W_n) \leq \sum_{m \geq 1} \lambda_m \, \bar{a}^{|E(T_m)|} < +\infty,$$
we apply the dominated convergence theorem, and conclude that $\P(\nN(W_n)^d=\kK^d)$ converges to $\P(\nN(W)^d=\kK^d) $ as $n \to \infty$. 

\end{proof}

\subsection{Tightness}
Notice that $X^W \in \mathcal{A}_d$ is equivalent to that $\nN(W)^d \in \mathcal{A}_d$. To make our computation clear, we will sometimes adopt the latter notation. Recall we want to show that 
\begin{align}
\P(\nN(W)^d \in \mathcal{A}_d)=\lim\limits_{n \to \infty} \P(\nN(W_n)^d \in \mathcal{A}_d).
\end{align}
To apply Proposition~\ref{prop:convergence}, we need a tightness result. 
\begin{lem}\label{lem:tightness}
For $K \in \N$, we define $\nN(W)^d \leq K$ if $N(W)_{\{i_1|i_2|\dotso|i_j\}} \leq K$ for any  $X^W_{\{i_1|i_2|\dotso|i_j\}}$ in the first $d$ generations. Suppose $\sup_{x,y}W(x,y) \leq \bar{a}$ for some positive constant $\bar{a}$. Then for any $\alpha>0$, $d \in \N$, there exists a large enough $K_0 \in \N$ uniformly for $x \in [0,1]$ such that $K \geq K_0$ implies
\begin{align}\label{eq:tightness1}
\P (\nN(W)^d \leq K \, | \, T(W)_0 =x ) > 1- (1/K)^{\alpha}. 
\end{align}
Here, the choice of $K_0$ only depends on $\alpha$, $d$ and $\bar{a}$.
\end{lem}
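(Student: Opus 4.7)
The plan is to use a union bound: a particle in the first $d$ generations has more than $K$ offspring only with probability at most the total number of such particles times the Poisson tail probability, and the latter decays faster than any polynomial.

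First, I would set up notation. Let $Z_t$ denote the size of generation $t$ of $X^W$, so that $Z_0=1$. Conditional on a particle of type $y$, its number of offspring is $\mathrm{Poisson}(\lambda_y)$ with $\lambda_y:=\int W(y,z)\,dz \leq \bar{a}$. In particular, each $Z_{t+1}$ satisfies $\E[Z_{t+1}\mid Z_t] \leq \bar{a}\, Z_t$, and iterating gives $\E[Z_t \mid T(W)_0=x] \leq \bar{a}^t$ uniformly in $x$. Set
\begin{align*}
M := Z_0 + Z_1 + \dotsb + Z_{d-1},
\end{align*}
so that $\E[M\mid T(W)_0=x] \leq C(d,\bar{a}) := 1+\bar{a}+\dotsb+\bar{a}^{d-1}$.

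Next I would apply a union bound. The event $\{\nN(W)^d \not\leq K\}$ means that some particle in the first $d$ generations has offspring count exceeding $K$. Conditioning on the particles in the first $d$ generations (hence on $M$) and using that each offspring count is stochastically dominated by $\mathrm{Poisson}(\bar{a})$, we obtain
\begin{align*}
\P\bigl(\nN(W)^d \not\leq K \mid T(W)_0=x\bigr) \;\leq\; \E[M\mid T(W)_0=x]\cdot \P(\mathrm{Poisson}(\bar{a}) > K) \;\leq\; C(d,\bar{a})\cdot \P(\mathrm{Poisson}(\bar{a}) > K).
\end{align*}

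Finally, I would invoke a standard Poisson tail estimate: for $K > \bar{a}$,
\begin{align*}
\P(\mathrm{Poisson}(\bar{a}) > K) \;\leq\; e^{-\bar{a}}\sum_{j>K}\frac{\bar{a}^j}{j!} \;\leq\; \frac{\bar{a}^{K+1}}{(K+1)!}\cdot\frac{1}{1-\bar{a}/(K+2)},
\end{align*}
which by Stirling decays super-polynomially in $K$. Hence, for any fixed $\alpha>0$, one can choose $K_0 = K_0(\alpha,d,\bar{a})$ so large that $C(d,\bar{a})\cdot \P(\mathrm{Poisson}(\bar{a}) > K) \leq K^{-\alpha}$ for all $K\geq K_0$, which yields \eqref{eq:tightness1} uniformly in $x$.

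There is no real obstacle here: the estimate is essentially a first-moment argument together with the well-known factorial decay of the Poisson tail. The only point that deserves care is that the bound on $\E[M]$ and the Poisson tail bound must be uniform in the type $x$ of the root, which follows from the uniform bound $\lambda_y\leq \bar{a}$ and the stochastic monotonicity of Poisson in its parameter.
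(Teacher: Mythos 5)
Your argument is correct in substance and takes a genuinely different route from the paper. The paper proves the lemma by induction on $d$, using the recursive formula for $g(x,\kK^d)$ (Proposition~3.3) and the Poisson tail $\psi_K(\bar a)$, and has to carefully degrade the exponent at each inductive step (passing from $\beta$ at generation $d-1$ to roughly $\beta-2$ at generation $d$ via $(1-K^{-\beta})^K>1-K^{-(\beta-2)}$), finally taking $\beta=\alpha+3$. Your approach replaces this induction with a single first-moment bound: $\P(\nN(W)^d\not\le K)\le\E[M]\,\P(\mathrm{Poi}(\bar a)>K)$ with $\E[M]\le\sum_t\bar a^t$, combined with the super-polynomial decay of the Poisson tail. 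This is shorter and avoids the exponent bookkeeping, while giving the same uniform dependence of $K_0$ on $\alpha,d,\bar a$.

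One step in your write-up is stated loosely and should be tightened. You say ``conditioning on the particles in the first $d$ generations (hence on $M$),'' but conditioning on the realized tree through generation $d$ also reveals the offspring counts of all particles in generations $0,\dots,d-1$, so there is nothing random left to bound there; and conditioning only on $M$ does not leave the offspring counts as independent Poisson variables. The clean way to reach your inequality is generation by generation: let $\mathcal F_t$ be the $\sigma$-algebra generated by the tree through generation $t$ including the types of the particles in generation $t$ (but not their offspring counts), and let $A_t$ be the event that some particle in generation $t$ has more than $K$ children. Given $\mathcal F_t$, the offspring counts of the $Z_t$ particles in generation $t$ are independent Poissons with parameters bounded by $\bar a$, so $\P(A_t\mid\mathcal F_t)\le Z_t\,\P(\mathrm{Poi}(\bar a)>K)$ and hence $\P(A_t)\le\bar a^t\,\P(\mathrm{Poi}(\bar a)>K)$. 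Summing over $t$ gives exactly your displayed bound (with $M$ replaced by $\sum_t Z_t$ over the relevant generations). You should also double-check the indexing: the paper's $\nN(W)^d$ records the offspring counts of all particles in generations $0$ through $d$, so the sum should run to $d$, not $d-1$; this only changes the constant $C(d,\bar a)$.
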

\begin{proof}
Let us prove \eqref{eq:tightness1} by induction. Recall for the initial generation we have that 
$$g(x, k)=\frac{1}{k!}e^{-\int W(x,y) dy }\left(\int W(x,y) dy \right)^{k}.$$
For any $k \in \N$, we define for $c \in \mathbb{R}_+$
\begin{align*}
\psi_{k}(c) := \sum_{l=k+1}^{\infty} \frac{1}{l!} e^{-c} c^l. 
\end{align*}
Thus we have that 
\begin{align*}
\P( N(W)_0 \leq k \, | \, T(W)_0=x)=1- \psi_k \left(\int W(x,y) dy \right).
\end{align*}
It can be easily verified that $\psi_k'(c)=\frac{e^{-c} c^{k}}{k!} \geq 0$, and hence $\psi_k \left(\int W(x,y) dy \right)  \leq \psi_k(\bar{a})$. Take $K$ large enough that $\psi_K(\bar{a}) < (1/K)^{\alpha}$. Then it is clear that 
\begin{align*}
\P(N(W)_0  \leq K \, | \, T(X^n)_0=x) =1- \psi_K \left(\int W(x,y) dy \right)  >1- (1/K)^{\alpha}.
\end{align*}

Assume our claim is true for $d-1$. Then for any $\beta >0$, there exists a $K$ such that $$\P(\nN(W)^d_{\{j\}} \leq K \, | \, T(W)^d_{\{j\}}=y) \geq 1- (1/K)^{\beta} .$$
Note that 
\begin{align*}
& \P(\nN(W)^d \leq K \, | \, T(W)_0=x ) \\
& = \sum_{k=0}^{K} \P(\nN(W)^d_{\{j\}} \leq K, j=1, \dotso, k \, | \, N(W)_0=k, T(W)_0=x ) \, \P(N(W)_0=k \, | \, T(W)_0=x). 
\end{align*}
As in the proof of Proposition~\ref{prop:recursive2}, we have that 
\begin{align*}
\P(\nN(W)^d \leq K \, | & \, T(W)_0=x ) \\
=& \sum_{k=0}^{K} \left( \frac{e^{-\int W(x,y) dy}}{ k!} \prod_{j=1}^{k} \left( \int_y W(x,y) \P(\nN(W)^d_{\{j\}} \leq K \, | \, T(W)_{\{j\}}=y) \, dy \right) \right) \\
>& \sum_{k=0}^{K} \left( \frac{e^{-\int W(x,y) dy}}{ k!} \prod_{j=1}^{k} \left( \int_y   W(x,y) (1-(1/K)^{\beta}\, dy )\right) \right) \\
=& \sum_{k=0}^{K} \left( \frac{e^{-\int W(x,y) dy}}{ k!} \left( \int W(x,y) dy \right)^k (1-(1/K)^{\beta})^k \right).
\end{align*}

Since $(1-(1/K)^{\beta})^K > 1- (1/K)^{\beta-2}$ for large $K$, we have that 
\begin{align*}
\P(\nN(W)^d \leq K \, | \, T(W)_0=x ) >& \sum_{k=0}^{K} \left( \frac{e^{-\int W(x,y) dy}}{ k!} \left(\int W(x,y) dy \right)^k  \right) (1- (1/K)^{\beta -2}) \\
>& (1-\psi_K(\bar{a}))(1- (1/K)^{\beta -2}).
\end{align*}
Therefore by taking $\beta =\alpha+3$, and large $K$ such that $\psi_K(\bar{a}) < (1/K)^{\alpha+1}$, we conclude that 
\begin{align*}
\P (\nN(W)^d \leq K \, | \, T(W)_0 =x ) > 1- (1/K)^{\alpha}. 
\end{align*}

\end{proof}

\begin{prop}\label{prop:branchingeq}
 Suppose  $W_n$ is a sequence of graphons such that $d_\square(W_n,W) \to 0$, and satisfying $\sup_{n,x,y} W_n(x,y) \leq \bar{a}$ for some positive constant $\bar{a}$. Then for any fixed $d$, we have that 
\begin{align*}
\lim\limits_{n \to \infty} \P(\nN(W_n)^d \in \mathcal{A}_d) =\P(\nN(W)^d \in \mathcal{A}_d),
\end{align*}
from which we conclude that 
\begin{align*}
\limsup \limits_{n \to \infty} \P_{X^{W_n}}(\mathcal{A}) \leq \P_{X^W}(\mathcal{A}).
\end{align*}

\end{prop}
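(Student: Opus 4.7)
The plan is to combine the pointwise convergence from Proposition~\ref{prop:convergence} with the uniform tightness estimate from Lemma~\ref{lem:tightness} in order to transfer convergence from finite-dimensional marginals $\P(\nN(W_n)^d = \kK^d)$ to the countable sum over $\kK^d \in \mathcal{A}_d$. The second assertion on $\limsup_n \P_{X^{W_n}}(\mathcal{A})$ will then follow from the nested structure $\mathcal{A}_1 \supset \mathcal{A}_2 \supset \dotso$ together with $\mathcal{A} = \bigcap_{d \geq 1} \mathcal{A}_d$ and continuity of probability from above applied to $\P_{X^W}$.

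For the first statement, fix $d \in \N$ and $\vep > 0$. Since $\sup_{n,x,y} W_n(x,y) \leq \bar{a}$ uniformly in $n$, Lemma~\ref{lem:tightness} provides a single threshold $K$, depending only on $\vep$, $d$, and $\bar{a}$, such that
$$\P(\nN(W_n)^d \not\leq K) < \vep$$
for every $n$; the same bound is inherited by the cut-norm limit $W$, which may be taken bounded by $\bar{a}$ almost everywhere as a cut-norm limit of uniformly bounded graphons. Set $\mathcal{A}_d^K := \mathcal{A}_d \cap \{\kK^d : \kK^d \leq K\}$, which is a \emph{finite} collection of configurations (each entry is an integer in $[0,K]$ and the tree has depth $d$). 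Then
$$\bigl|\P(\nN(W_n)^d \in \mathcal{A}_d) - \P(\nN(W)^d \in \mathcal{A}_d)\bigr| \leq \bigl|\P(\nN(W_n)^d \in \mathcal{A}_d^K) - \P(\nN(W)^d \in \mathcal{A}_d^K)\bigr| + 2\vep,$$
and the first term on the right-hand side is a \emph{finite} sum of marginal differences, each of which vanishes as $n \to \infty$ by Proposition~\ref{prop:convergence}. Taking $\limsup_n$ on both sides and then $\vep \to 0$ concludes the first claim.

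For the $\limsup$ statement, observe that $\mathcal{A} \subset \mathcal{A}_d$ gives $\P_{X^{W_n}}(\mathcal{A}) \leq \P_{X^{W_n}}(\mathcal{A}_d)$ for every $d$. Applying the first part of the proposition (for each fixed $d$),
$$\limsup_{n \to \infty} \P_{X^{W_n}}(\mathcal{A}) \leq \limsup_{n \to \infty} \P_{X^{W_n}}(\mathcal{A}_d) = \P_{X^W}(\mathcal{A}_d).$$
Since $\mathcal{A}_d$ is a decreasing family with $\mathcal{A}_d \downarrow \mathcal{A}$, continuity of probability from above yields $\P_{X^W}(\mathcal{A}_d) \to \P_{X^W}(\mathcal{A})$ as $d \to \infty$, producing the desired bound.

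The main obstacle is ensuring that the tightness bound of Lemma~\ref{lem:tightness} can be applied with the \emph{same} $K$ uniformly across all $W_n$ and the limit $W$. This is precisely the content of the statement that the constants depend only on $(\vep, d, \bar{a})$, and hinges on the uniform upper bound $\sup_{n,x,y} W_n(x,y) \leq \bar{a}$. Once this uniform tail control is secured, the truncation reduces the problem to the finite-dimensional convergence already established in Proposition~\ref{prop:convergence}, and the rest is routine.
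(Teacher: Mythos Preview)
Your proof is correct and follows essentially the same route as the paper's: truncate via Lemma~\ref{lem:tightness} (which applies with a single $K$ for all $n$ and for $W$ because the bound depends only on $\bar a$, $d$, and the tail parameter), reduce to a finite sum of configuration probabilities, and invoke Proposition~\ref{prop:convergence}; then pass from $\mathcal{A}_d$ to $\mathcal{A}$ using $\mathcal{A}\subset\mathcal{A}_d$ and $\P_{X^W}(\mathcal{A}_d)\downarrow\P_{X^W}(\mathcal{A})$. The only extra care you take---noting explicitly that $W\le\bar a$ a.e.\ as a cut-norm limit of graphons bounded by $\bar a$, so that Lemma~\ref{lem:tightness} also covers the limit---is a point the paper leaves implicit, and your justification is sound.
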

\begin{proof}
Due to Proposition~\ref{prop:convergence}, it can be seen that for fixed $d, K$ 
\begin{align*}
\lim\limits_{n \to \infty} \P(\nN(W_n)^d \in \mathcal{A}_d, \nN(W_n)^d \leq K) =\P(\nN(W)^d \in \mathcal{A}_d, \nN(W)^d \leq K).
\end{align*}
 Applying Lemma~\ref{lem:tightness}, we let $K \to \infty$, and obtain that 
\begin{align*}
\lim\limits_{n \to \infty} \P(\nN(W_n)^d \in \mathcal{A}_d)=\P(\nN(W)^d \in \mathcal{A}_d).
\end{align*}
  
For any $\epsilon >0$, there exists a $d$ such that 
\begin{align*}
\P(\nN(W)^d \in \mathcal{A}_d)=\P(X^W \in \mathcal{A}_d) \leq \P_{X^W}(\mathcal{A})+\epsilon.
\end{align*}
Then, it can be easily verified that 
\begin{align*}
\limsup \limits_{n \to \infty} \P_{X^{W_n}}(\mathcal{A}) \leq 
\limsup \limits_{ n \to \infty}  \P(\nN({W_n})^d \in \mathcal{A}_d)=\P(\nN(W)^d \in \mathcal{A}_d) \leq 
\P_{X^W}(\mathcal{A})+\epsilon.
\end{align*}
Therefore we obtain that
\begin{align*}
\limsup \limits_{n \to \infty} \P_{X^{W_n}}(\mathcal{A}) \leq \P_{X^W}(\mathcal{A}).
\end{align*}

\end{proof}

\subsection{Completing the proof of the upper bound}

Recalling Proposition~\ref{prop:frijun121237pm}, we have that 
$$ C_k\left(G_n \left( \frac{1}{n} \right) \right)  \leq n \P(X^{n,\rho_n} \in \mathcal{A}_d)+ o_p(n).$$
Note that $X^{n, \rho_n}$ is the branching process associated with the graphon $\rho_n W_{G_n}$, and $d_\square ( W_{G_n}, W) \to 0$. Applying Proposition~\ref{prop:branchingeq} with $W_n = \rho_n W_{G_n}$,  we obtain that $$C_k\left(G_n \left( \frac{1}{n} \right) \right) \leq n \P_{X^W}(\mathcal{A}_d) + o_p(n).$$
Letting $d \to \infty$ in the above inequality, we conclude our result.

$\hfill\square$

\section{The proof of the lower bound in Theorem~\ref{thm:554pm07dec20}}\label{sec:446pm10dec20}
Before proceeding to our proof of the lower bound, we argue that it suffices to prove it for irreducible $W$. A graphon $W$ is said to irreducible if there is no measurable $A \subset [0,1]$ such that $\text{Leb}(A) \in (0,1)$ and $W=0$ a.e. on $A\times A^c$.

 According to \cite[Lemma 5.17]{MR2337396}, for any graphon $W$, there exists a partition $[0,1]=\cup_{i=1}^N I_i$ with $0 \leq N \leq \infty$ such that $|I_i| >0$ for each $i \geq 1$, the restriction of $W$ on $I_i \times I_i$ is irreducible for $i \geq 1$, and $W=0$ a.e. on $([0,1] \times [0,1]) \setminus \cup_{i=1}^N(I_i \times I_i)$. Without loss of generality, we assume that for each $i\geq 1$, $I_i$ is an interval. Denote the restriction $W|_{I_i \times I_i}$ by $W_i$. To properly define the branching process of $W_i$, we identify it with a graphon $\widetilde{W}_i:I \times I \to [0,\infty)$ such that $\widetilde{W}_i(x,y)=W(x,y)$ for $x,y \in I_i$ and $\widetilde{W}_i(x,y)=0$ otherwise, i.e., $X^{\lambda W_i}:=X^{\lambda \widetilde{W}_i}$. Then it can be easily verified that $\P_{X^{\lambda W}}(\mathcal{A})= \sum_{i=1}^N \P_{X^{\lambda W_i}}(\mathcal{A})$. Hence under Assumption~\ref{assume2}, $\lambda \mapsto \P_{X^{\lambda W_i}}(\mathcal{A})$ is continuous at $\lambda=1$ for each $i \geq 1$. Fix $N_0 \leq N$. Each $G_n$ can be decomposed into $N_0$ disconnected subgraphs $(G_n^i)_{i=1,\dotso,N_0}$ such that $G_n^i$ converges to $W|_{I_i \times I_i}$ in the cut norm. \emph{Assuming our result holds for irreducible graphons}, we obtain that (after appropriate normalization of $G_n^i$ and $W_i$) 
\begin{align*}
C_k\left(G^i_n \left( \frac{1}{n} \right) \right) \geq n  \P_{X^{W_i}}(\mathcal{A}_d) + o_p(n),
\end{align*}
and thus
\begin{align*}
C_k\left(G_n \left( \frac{1}{n} \right) \right) \geq \sum_{i=1}^{N_0} C_k\left(G^i_n \left( \frac{1}{n} \right) \right) \geq n  \sum_{i=1}^{N_0} \P_{X^{W_i}}(\mathcal{A}_d) + o_p(n).
\end{align*}
Letting $N_0 \to N$, we can conclude the proof for the lower bound. From now on, let us assume that \emph{$W$ is irreducible}.

We say a graphon $F$ is finitary if there exist finitely many disjoint intervals $I_{t_i}, i=1, \dotso, M$ such that  $\cup_{i=1}^M I_{t_i}=[0,1]$ and the restriction of $F$ on $I_{t_i} \times I_{t_j}$ is a constant for any $1 \leq i, j \leq M$. According to \cite[Lemma 7.3]{MR2337396}, the graphon $W$ can be approximated pointwise from below by finitary graphons. More precisely, we have that 
\begin{lem}\label{lem:approx}
 There exists a sequence of finitary graphons $(F_m)_{m \in \N}$ such that $F_m \leq W$ and $\lim\limits_{m \to \infty} F_m(x,y)=W(x,y)$ a.s. 
\end{lem}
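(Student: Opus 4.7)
The plan is to combine a layer-cake decomposition of $W$ with an inner approximation of each of the resulting super-level sets by a union of rectangles on a common partition of $[0,1]$ into intervals. Since $0 \le W \le \bar a$, I would start from
\[
W(x,y) = \int_0^{\bar a} \mathbbm{1}_{\{W \ge t\}}(x,y)\, dt,
\]
and discretise the threshold: for $m \ge 1$ and $k = 1,\dots,m$ let $E^{(m)}_k := \{(x,y) \in [0,1]^2 : W(x,y) \ge k\bar a/m\}$, a symmetric measurable set, and set $W^{(m)} := (\bar a/m) \sum_{k=1}^{m} \mathbbm{1}_{E^{(m)}_k}$. Then $W^{(m)} \le W$ pointwise and $W - W^{(m)} \le \bar a/m$ uniformly.

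Next, for each $m$, I would construct a single symmetric partition $\{I_1^{(m)},\dots,I_{N_m}^{(m)}\}$ of $[0,1]$ into finitely many intervals together with, for each $k \le m$, a symmetric index set $J^{(m)}_k \subset [N_m]\times[N_m]$ such that
\[
\tilde E^{(m)}_k := \bigcup_{(i,j)\in J^{(m)}_k} I_i^{(m)} \times I_j^{(m)} \subset E^{(m)}_k \qquad \text{and} \qquad \bigl|E^{(m)}_k \setminus \tilde E^{(m)}_k\bigr| \le 2^{-m}.
\]
Set $F_m := (\bar a/m) \sum_{k=1}^{m} \mathbbm{1}_{\tilde E^{(m)}_k}$; this is finitary and symmetric, with $F_m \le W^{(m)} \le W$ pointwise. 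For the almost-sure convergence, the bound $\sum_{m} \sum_{k \le m} |E^{(m)}_k \setminus \tilde E^{(m)}_k| \le \sum_m m\cdot 2^{-m} < \infty$ together with Borel--Cantelli produces a null set $N$ off of which, eventually in $m$, one has $(x,y) \in \tilde E^{(m)}_k$ if and only if $(x,y) \in E^{(m)}_k$ for every $k \le m$; combined with $W - W^{(m)} \le \bar a/m$ this yields $F_m(x,y) \to W(x,y)$ for almost every $(x,y)$.

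The main obstacle is the inner-approximation step. For an arbitrary measurable $E \subset [0,1]^2$ no positive-measure axis-aligned product rectangle need be contained in $E$ (products of fat Cantor sets being the canonical obstruction), so one cannot carry out the construction on any fixed uniform grid. The standard way out, which is essentially the content of \cite[Lemma 7.3]{MR2337396}, is to exploit the fact that graphons are defined only up to measure-preserving rearrangement of $[0,1]$: by the measure-isomorphism theorem applied to the countably-generated $\sigma$-algebra spanned by the family $\{E^{(m)}_k\}_{m,k}$, one can pre-compose $W$ with a measure-preserving bijection $\sigma : [0,1] \to [0,1]$ so that in the new coordinates each $E^{(m)}_k$ coincides, modulo null sets, with a finite union of product rectangles on a refining sequence of interval partitions. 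After this rearrangement the $\tilde E^{(m)}_k$ exist by construction, and since the branching process $X^W$ and the probability $\mathbb{P}_{X^W}(\mathcal{A})$ depend on $W$ only through its graphon equivalence class, this rearrangement is harmless for the rest of the proof.
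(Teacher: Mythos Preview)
The paper does not prove this lemma; it simply quotes \cite[Lemma~7.3]{MR2337396}. Your layer-cake reduction to $W^{(m)}$ is fine, and you are right that the only issue is the inner approximation of the super-level sets $E^{(m)}_k\subset[0,1]^2$ by finite unions of product intervals. The gap is in the rearrangement you propose as the fix. You claim that a measure-preserving $\sigma:[0,1]\to[0,1]$ can be chosen so that each $E^{(m)}_k$ ``coincides, modulo null sets, with a finite union of product rectangles on a refining sequence of interval partitions''. That is false in general: a one-dimensional $\sigma$ acts on $[0,1]^2$ only through the diagonal map $(\sigma,\sigma)$, and this cannot straighten an arbitrary two-dimensional set. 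For $W(x,y)=(x+y)/2$ (with $\bar a=1$) one has $E^{(m)}_k=\{x+y\ge 2k/m\}$; after \emph{any} $\sigma$ the $x$-slice of $(\sigma,\sigma)^{-1}E^{(m)}_k$ has Lebesgue measure $\max(0,1-2k/m+\sigma(x))$, which (since $\sigma$ is measure-preserving) assumes uncountably many values on a co-null set of $x$'s, whereas a finite union of product rectangles has only finitely many distinct slices up to null sets. Your appeal to the measure-isomorphism theorem for ``the $\sigma$-algebra spanned by the family $\{E^{(m)}_k\}$'' does not help: that $\sigma$-algebra lives on $[0,1]^2$, while the rearrangement you need is a map on $[0,1]$, and the isomorphism theorem says nothing about how two-dimensional sets transform under a diagonal one-dimensional action.

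The argument that actually underlies \cite[Lemma~7.3]{MR2337396} bypasses the two-dimensional super-level sets. One first produces symmetric step functions $F_m\le W$ with $F_m\nearrow W$ a.e.\ on a refining sequence of finite \emph{measurable} one-dimensional partitions $\mathcal{Q}_m$ of $[0,1]$ (e.g.\ by a monotone-class argument: the class of $W$ admitting such an approximation contains every $c\,\mathbbm{1}_{A\times B}$ and is closed under finite sums and increasing limits), and only then chooses a single $\sigma$ sending the nested one-dimensional pieces of the $\mathcal{Q}_m$'s to nested intervals. A further caution: your claim that the rearrangement is ``harmless for the rest of the proof'' because $\P_{X^W}(\mathcal{A})$ is a graphon invariant is incomplete. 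The vertex-labelling step in Section~\ref{subsec:main1} matches the intervals underlying $F_m$ against the intervals $J^n_i$ carrying $W_{G_n}$, so any rearrangement of $W$ must be accompanied by a compatible relabelling of the vertices of each $G_n$ if the hypothesis $\|W_{G_n}-W\|_\square\to 0$ is to survive.
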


Under Assumption~\ref{assume2}, we will prove in Subsection~\ref{subsec:main1} that for any $\vep>0, m \in \N$, 
\begin{align}\label{eq:main1}
 C_k\left(G_n\left(\frac{1}{n}\right)\right) \geq (1-2\vep) n \P_{X^{(1-2\vep)F_m}}(\mathcal{A})+o_p(n).
\end{align}
Then in Subsection~\ref{subsec:main2}, we will show the continuity property 
\begin{align}\label{eq:main2}
\liminf\limits_{\vep \to 0, m \to \infty}  \P_{X^{(1-2\vep)F_m}}(\mathcal{A}) \geq \P_{X^W}(\mathcal{A}).
\end{align}
It is clear that \eqref{eq:main1} and \eqref{eq:main2} together prove the lower bound part of Theorem~\ref{thm:554pm07dec20}, i.e., 
$$C_k\left(G_n \left( \frac{1}{n} \right) \right) \geq n \P_{X^W}(\mathcal{A}) + o_p(n).$$

\subsection{Proof of \eqref{eq:main1}}\label{subsec:main1}

Fixing $m \in \N$, and $\vep \in (0,\frac{1}{m})$ such that  $\lambda \to \P_{X^{\lambda (1-\vep)F_m}}(\mathcal{A})$ is continuous at $\lambda =1$.  Suppose $[0,1]$ is a disjoint union of intervals $I_{t_j}, j=1 ,\dotso, M$, and there exists a collection $\{F_m(t_i,t_j): \, 1\leq i,j \leq M\}$ such that $F_m(x,y)=F_m(t_j,t_k)$ for $ x \in I_{t_j}, y \in I_{t_k}$. Here we say $t_h, h=1,\dotso,M$ labels to distinguish types in the definition of branching process $X^n$. 

Before proceeding to the rigorous proof, let us first give main ideas of our argument. We divide vertices of $G_n$ into $M$ groups $\mathbf{Good}_{n, t_1},\dotso, \mathbf{Good}_{n, t_M}$ with the property that for any vertex $i \in \mathbf{Good}_{n, t_h}$ and $k=1,\dotso, M$
\begin{align*}
\frac{\widetilde{d}^n_{i,t_k}}{n}:= \sum\limits_{ j\in \mathbf{Good}_{n,t_k}} \frac{a^n_{i,j}}{n} \geq (1-\vep)F_m(t_h,t_k)|I_k|. 
\end{align*}
Therefore, we can heuristically consider $G_n$ as a `finitary' graph by labelling vertices in $\mathbf{Good}_{n,t_h}$ by $t_h$, $h=1,\dotso, M$. Due to the above inequality, the branching process $X^{n}$ associated with $G_n$ stochastically dominates, in the first order, the branching process $X^{(1-\vep)F_m}$ associated with $(1-\vep)F_m$. Take $F^{\vep}_m\left( \frac{1}{n}\right)$ to be an $n$-vertex random graph sampled from $(1-\vep)F_m$, i.e., independently uniformly select vertices $v_i \in [0,1]$ and then connect $v_i,v_j$ independently with probability $(1-\vep)F_m(v_i,v_j)/n$. By the standard exploration argument (see e.g. \cite[Section 9]{MR2337396}), locally the random graph $G_n\left( \frac{1}{n} \right)$ ($F^{\vep}_m\left( \frac{1}{n} \right)$ resp.) is almost the branching process $X^{G_n}$ ($X^{(1-\vep)F_m}$ resp.). Thus, heuristically the random graph  $G_n\left( \frac{1}{n} \right)$ is more connected than the random graph $F^{\vep}_m\left( \frac{1}{n}\right)$, and thus has larger size of $k$-core.  Therefore, the inequality \eqref{eq:main1} follows from \cite[Theorem 3.1]{MR2376426}, which says $$C_k\left(F^{\vep}_m\left( \frac{1}{n}\right) \right)= n \P_{X^{(1-\vep)F_m}}(\mathcal{A})+o_p(n) .$$ The following simple lemma will be used to label vertices of $G_n$.  
 
\begin{lem}\label{lem:approx}
Suppose $\vep >0$ is a fixed constant, and $\|W_{G_n}-W\|_{\square} \to 0$.  Then there exists a collection of disjoint subsets $\widetilde{\mathbf{Bad}}_{n,t_j} \subset I_{t_j}, j=1,\dotso, M$ such that 
\begin{enumerate}[(i)]
\item $ |\widetilde{\mathbf{Bad}}_{n,t_j}| = o(1), \, j=1,\dotso, M$.
\item For any $x \in  I_{t_j} \setminus \widetilde{\mathbf{Bad}}_{n,t_j}$, we have that 
\begin{align}\label{eq:include3}
\int_{I_{t_k}} W_{G_n}\left(x, y\right) \, dy \geq  (1-\vep/2)  F_m(t_j,t_k) |I_{t_k}|, \, \, \,   k=1,\dotso, M.
\end{align}
\end{enumerate}  
\end{lem}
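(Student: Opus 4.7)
The plan is to take $\widetilde{\mathbf{Bad}}_{n,t_j}=\bigcup_{k=1}^{M} B_{n,j,k}$, where
\[
B_{n,j,k}=\Bigl\{x\in I_{t_j}:\int_{I_{t_k}} W_{G_n}(x,y)\,dy<(1-\vep/2)F_m(t_j,t_k)|I_{t_k}|\Bigr\}
\]
collects the points violating \eqref{eq:include3} for a single index $k$. With this definition, part (ii) is immediate, and disjointness of the $\widetilde{\mathbf{Bad}}_{n,t_j}$ is inherited from disjointness of the $I_{t_j}$. So the task reduces to controlling the Lebesgue measure of each of the finitely many sets $B_{n,j,k}$.

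The estimate comes straight from the cut norm applied to the single test pair $(B_{n,j,k},I_{t_k})$. Integrating the defining inequality of $B_{n,j,k}$ in $x$ gives
\[
\int_{B_{n,j,k}\times I_{t_k}}W_{G_n}\leq (1-\vep/2)F_m(t_j,t_k)|I_{t_k}||B_{n,j,k}|.
\]
On the other hand, since $F_m$ is constant equal to $F_m(t_j,t_k)$ on $I_{t_j}\times I_{t_k}$ and $F_m\leq W$,
\[
\int_{B_{n,j,k}\times I_{t_k}} W\;\geq\;\int_{B_{n,j,k}\times I_{t_k}} F_m \;=\; F_m(t_j,t_k)|I_{t_k}||B_{n,j,k}|.
\]
Using $\bigl|\int_{B_{n,j,k}\times I_{t_k}}(W_{G_n}-W)\bigr|\leq \|W_{G_n}-W\|_\square$ and subtracting the two displays yields
\[
\tfrac{\vep}{2}\,F_m(t_j,t_k)\,|I_{t_k}|\,|B_{n,j,k}|\;\leq\;\|W_{G_n}-W\|_\square \;=\; o(1).
\]
If $F_m(t_j,t_k)=0$ then the set $B_{n,j,k}$ is actually empty because $W_{G_n}\geq 0$ makes the defining inequality impossible; otherwise $|B_{n,j,k}|=o(1)$ at the explicit rate $2\|W_{G_n}-W\|_\square/(\vep F_m(t_j,t_k)|I_{t_k}|)$. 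Since $F_m$ is finitary and $M$ is finite, the positive values $F_m(t_j,t_k)|I_{t_k}|$ admit a uniform positive lower bound (depending only on $m$), so a union bound over $k=1,\dotso,M$ produces part (i).

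There is no genuine technical obstacle, just careful bookkeeping. The only point worth flagging is that the decay rate of $|\widetilde{\mathbf{Bad}}_{n,t_j}|$ depends on $m$ and $\vep$ through the minimum positive value of $F_m$, which is why the lemma is stated for \emph{fixed} $\vep$ and $m$; this dependence is harmless in the subsequent argument, where $m$ and $\vep$ are chosen before sending $n\to\infty$.
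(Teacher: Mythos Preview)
Your proof is correct and follows essentially the same approach as the paper: define the bad set for each pair $(j,k)$ as the set of points violating \eqref{eq:include3}, test the cut norm against the rectangle $B_{n,j,k}\times I_{t_k}$, use $F_m\le W$ to replace $W$ by the constant $F_m(t_j,t_k)$ on that rectangle, and then take a finite union over $k$. Your explicit remark about the uniform positive lower bound on $F_m(t_j,t_k)|I_{t_k}|$ over the finitely many nonzero entries is a nice clarification of why the $o(1)$ is uniform in $k$, which the paper leaves implicit.
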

\begin{proof}
First let us recall that 
 one can also write
\begin{equation}\label{eqn:jul9415pm}
\|W\|_\square = \sup_{0\leq f,g \leq1 \text{   measurable  }} \left| \int f(x) g(y) W(x,y)\,dx\,dy \right|.
\end{equation}
For any $1 \leq j,k \leq M$, define
\begin{align*}
\widetilde{\mathbf{Bad}}_{n,t_j,t_k}=\left\{ x \in I_{t_j}:  \int_{I_{t_k}} W_{G_n}\left(x, y\right) \, dy <    (1-\vep/2)  F_m(t_j,t_k) |I_{t_k}| \right\}.
\end{align*}
In the case that $F_m(t_j,t_k) =0$, it is clear that $\widetilde{\mathbf{Bad}}_{n,t_j,t_k}= \emptyset$. Let us assume that $F_m(t_j,t_k)>0$.

Taking $f(x)=\mathbbm{1}_{\{x \in \widetilde{\mathbf{Bad}}_{n,t_j,t_k}\}}$ and $g(y)=\mathbbm{1}_{\{y \in I_{t_k}\}}$ in \eqref{eqn:jul9415pm} , we obtain that 
\begin{align*}
\int_{\widetilde{\mathbf{Bad}}_{n,t_j,t_k}} \, dx \int_{I_{t_k}} (W_{G_n}(x,y)- W(x,y)) \, dy \geq -\|W_{G_n}-W\|_{\square}=-o(1),
\end{align*}
and due to $F_m \leq W$, 
\begin{align}\label{eq:lowerbound2}
\int_{\widetilde{\mathbf{Bad}}_{n,t_j,t_k}} \, dx \int_{I_{t_k}} (W_{G_n}(x,y)- F_m(x,y)) \, dy \geq -o(1). 
\end{align}
Since for any $x \in \widetilde{\mathbf{Bad}}_{n,t_j,t_k}$ $$\int_{I_{t_k}} (W_{G_n}(x,y)- F_m(x,y)) \, dy < - \vep/2 F_m(t_j,t_k) |I_{t_k}|,$$ together with \eqref{eq:lowerbound2} it follows that 
\begin{align}\label{eq:lowerbound3}
\vep F_m(t_j,t_k) |I_{t_k}| | \widetilde{\mathbf{Bad}}_{n,t_j,t_k} | \leq  o(1).
\end{align} 
As a result of $F_m(t_j,t_k)>$, we obtain that 
\begin{align*}
|\widetilde{\mathbf{Bad}}_{n,t_j,t_k}| =o(1).
\end{align*}

Let us take $$\widetilde{\mathbf{Bad}}_{n, t_j}=\bigcup\limits_{k=1}^M \widetilde{\mathbf{Bad}}_{n,t_j,t_k},$$
and it is clear that $|\widetilde{\mathbf{Bad}}_{n, t_j}| = o(1)$ satisfies \eqref{eq:include3}. 
\end{proof}

Before proving the main result in this subsection, we would like to point out that our main contribution here is the observation that one can label vertices of $G_n$ so that heuristically it dominates  the finitary graphon $(1-\vep)F_m$. The remaining part of proof is just a modification of \cite[Theorem 3.1]{MR2376426}. We summarize it as the following lemma, and refer the reader to \cite{MR2376426} for a detailed argument.

\begin{lem}\label{lem:riordan}
Suppose $F_m$ is a finitary graphon with $M$ labels $t_1, \dotso, t_M$, and  $\lambda \to \P_{X^{\lambda F_m}}(\mathcal{A})$ is continuous at $\lambda =1$. Let $G_n$ be a sequence of graphs such that $\sup \{ a^n_{i,j} \} < +\infty$. Denote by $X^{G_n}_i$ ($X^{F_m}_{t_h}$ resp.) the branching process associated with $G_n$ ($F_m$ resp.) that has the initial particle with type $i$ (label $t_h$ resp.). If the vertices of $G_n$ can be divided into $M$ groups $\mathbf{G}_{n,t_h}, h=1, \dotso, M$ such that for some $\vep \in (0,1)$
\begin{enumerate}[(i)]
\item $\frac{|\mathbf{G}_{n, t_h}|}{n} \geq (1-\vep)|I_{t_h}|,  h=1, \dotso, M$,
\item For each vertex $i \in \mathbf{G}_{n, t_h}$, the branching process $X^{G_n}_{i}$ stochastically dominates, in the first order, the branching process $X^{F_m}_{t_h}$,
\end{enumerate}
then it holds that 
\begin{align*}
C_k\left(G_n\left( \frac{1}{n} \right) \right) \geq (1-\vep)n \P_{X^{(1-\vep)F_m}}(\mathcal{A})+ o_p(n). 
\end{align*}
\end{lem}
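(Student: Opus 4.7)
The plan is to reduce the statement to Riordan's $k$-core theorem for inhomogeneous random graphs \cite{MR2376426} (Theorem~3.1 therein), via a stochastic-domination coupling together with the monotonicity of the $k$-core under edge addition.

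First I would trim the good groups to the exact proportions prescribed by $F_m$: for each $h\in\{1,\dotso,M\}$, pick $\widetilde{\mathbf{G}}_{n,t_h}\subseteq\mathbf{G}_{n,t_h}$ of size $\lfloor(1-\vep)|I_{t_h}|\,n\rfloor$, which is possible by hypothesis~(i), and let $n':=\sum_h|\widetilde{\mathbf{G}}_{n,t_h}|=(1-\vep)n+O(1)$. Assign type $t_h$ to every vertex of $\widetilde{\mathbf{G}}_{n,t_h}$ and let $\widetilde{G}_n$ denote the subgraph of $G_n(1/n)$ induced on $\bigcup_h\widetilde{\mathbf{G}}_{n,t_h}$. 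Since vertex removal only shrinks the $k$-core, $C_k(G_n(1/n))\geq C_k(\widetilde{G}_n)$.

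The main step is then to couple $\widetilde{G}_n$ with a standard inhomogeneous random graph $H_n$, sampled on the same typed $n'$-vertex set from the finitary kernel $(1-\vep)F_m$, in such a way that $H_n$ is a subgraph of $\widetilde{G}_n$. Hypothesis~(ii), unpacked at the level of the one-step Poisson offspring distributions, amounts to the averaged inequality
\begin{align*}
\sum_{j\in\widetilde{\mathbf{G}}_{n,t_k}}\frac{a^n_{i,j}}{n}\;\geq\;(1-\vep)F_m(t_h,t_k)|I_{t_k}| \qquad\text{for every }i\in\widetilde{\mathbf{G}}_{n,t_h},\ k\in\{1,\dotso,M\}.
\end{align*}
Using this, one can, row by row (vertex $i$ by vertex $i$), split the independent Bernoulli$(a^n_{i,j}/n)$ indicators of the edges of $\widetilde{G}_n$ into a subcollection distributed as independent Bernoulli$((1-\vep)F_m(t_h,t_k)/n')$ indicators plus residuals, thereby exhibiting $H_n\subseteq\widetilde{G}_n$. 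Granted this, combining monotonicity of the $k$-core with \cite{MR2376426}, Theorem~3.1, applied to the finitary kernel $(1-\vep)F_m$ (whose continuity at $\lambda=1$ is part of the hypothesis), we obtain
\begin{align*}
C_k(\widetilde{G}_n)\;\geq\;C_k(H_n)\;=\;n'\,\mathbb{P}_{X^{(1-\vep)F_m}}(\mathcal{A})+o_p(n)\;=\;(1-\vep)\,n\,\mathbb{P}_{X^{(1-\vep)F_m}}(\mathcal{A})+o_p(n),
\end{align*}
which is the desired bound.

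The main obstacle is the coupling step itself. The averaged inequality above does \emph{not} entail a per-pair bound $a^n_{i,j}/n\geq (1-\vep)F_m(t_h,t_k)/n'$, so one cannot thin $\widetilde{G}_n$ edge by edge. The row-wise splitting construction is feasible precisely because of the sum constraint, but some care is needed to verify that the resulting $H_n$ has the correct joint law of independent Bernoulli edges required by Riordan's theorem, and in particular has a natural kernel converging to $(1-\vep)F_m$ in the cut metric. This adaptation is the "minor variant" of \cite{MR2376426} alluded to in the introduction, and is where the bulk of the technical care is required.
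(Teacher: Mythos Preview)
Your overall reduction idea---trim the groups, embed a ``clean'' inhomogeneous random graph $H_n$ sampled from $(1-\vep)F_m$ inside $\widetilde G_n$, then invoke Riordan's theorem as a black box---is natural, but the coupling step that you correctly identify as the main obstacle is in fact a genuine gap, not merely a matter of care. From the row-sum inequality
\[
\sum_{j\in\widetilde{\mathbf G}_{n,t_k}}\frac{a^n_{i,j}}{n}\;\ge\;(1-\vep)F_m(t_h,t_k)\,|I_{t_k}|
\]
alone one \emph{cannot} manufacture, inside the edge set of $\widetilde G_n$, a family of \emph{independent} Bernoulli$\bigl((1-\vep)F_m(t_h,t_k)/n'\bigr)$ indicators. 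Independence forces a per-pair bound $a^n_{i,j}/n\ge (1-\vep)F_m(t_h,t_k)/n'$, which the averaged inequality does not give; any ``row-wise splitting'' that redistributes mass among the $j$'s destroys the product structure required by Riordan's model. (A secondary issue: the inequality you write does not follow from hypothesis~(ii) after trimming, since removing up to $O(\vep n)$ vertices from $\mathbf G_{n,t_k}$ can cost up to $O(\vep\bar a)$ in each row sum, not $\vep F_m(t_h,t_k)|I_{t_k}|$; but this is repairable by adjusting the $\vep$.)

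The paper takes a different route and does not attempt any edge-wise subgraph coupling. Instead it runs Riordan's lower-bound argument from \cite{MR2376426} directly on $G_n(1/n)$: the proof there explores the local neighbourhood of each vertex and compares the exploration to a branching process. Hypothesis~(ii) is exactly what is needed to make that internal comparison go through---for every vertex $i\in\mathbf G_{n,t_h}$ the exploration process dominates $X^{F_m}_{t_h}$, and since the events driving the lower bound (variants of $\mathcal A_d$) are increasing, the domination propagates. Hypothesis~(i) then accounts for the slightly deficient type proportions, yielding the factors $(1-\vep)$ in front of both $n$ and $F_m$. In short, the ``minor variant'' is a modification \emph{inside} Riordan's argument at the branching-process level, not a reduction to his theorem via a subgraph.
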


\begin{proof}[Completing the proof of \eqref{eq:main1}]
Since $\lambda \to \P_{X^{\lambda F_m}}(\mathcal{A})$ is non-decreasing with respect to $\lambda$, it has only countably many discontinuity points. Therefore we can choose arbitrarily small $\vep$ such that $\lambda \to \P_{X^{\lambda(1-\vep)F_m}}(\mathcal{A})$ is continuous at $\lambda=1$. We choose $0<\vep<\frac{1}{m}$, and take $\widetilde{\mathbf{Bad}}_{n,t_h}$ as in Lemma~\ref{lem:approx}.  For $h=1, \dotso, M$, define 
\begin{align*}
\mathbf{Good}_{n, t_h}: =\left\{ i  \in [n]: \left( \frac{i-1}{n}, \frac{i}{n} \right] \in I_{t_h} \setminus \widetilde{\mathbf{Bad}}_{n,t_h} \right\}. 
\end{align*}
Due to the construction of $W_{G_n}$ in \eqref{eqn:satapr11348pm}, for any $\left( \frac{i-1}{n}, \frac{i}{n} \right] \in I_{t_h}$, we have either $ \left( \frac{i-1}{n}, \frac{i}{n} \right] \subset \widetilde{\mathbf{Bad}}_{n,t_h}$ or $\left( \frac{i-1}{n}, \frac{i}{n} \right] \cap \widetilde{\mathbf{Bad}}_{n,t_h}=\emptyset$. Therefore it can be easily verified that 
\begin{align*}
\frac{|\mathbf{Good}_{n, t_h}|}{n}  \geq |I_{t_h}|- o(1).
\end{align*}
For any $i \in \mathbf{Good}_{n,t_h}$, define 
\begin{align*}
\widetilde{d}^n_{i,t_k}:= \sum\limits_{ j\in \mathbf{Good}_{n,t_k}}a^n_{i,j}, \quad k=1,\dotso, M.
\end{align*}
As a result of \eqref{eq:include3}, we obtain that 
\begin{align*}
\frac{\widetilde{d}^n_{i,t_k}}{n} &\geq \int_{I_{t_k}} W_{G_n}\left(i/n, y\right) \, dy - \bar{a} \left(|I_{t_k}|- \frac{|\mathbf{Good}_{n, t_k}|}{n}  \right)\\
&\geq (1-\vep/2)  F_m(t_h,t_k) |I_{t_k}|-o(1)\bar{a}.
\end{align*}
We conclude that for large enough $n$, there exists a collection of disjoint $\mathbf{Good}_{n,t_h} \subset [n], h=1, \dotso, M$, which satisfies the following
\begin{enumerate}[(i)]\label{enm:bad}
\item For all $ h=1, \dotso, M$, 
\begin{align}\label{eq:include2}
\frac{|\mathbf{Good}_{n,t_h}|}{n} \geq (1- \vep)|I_{t_h}|.
\end{align}
\item For any $i \in \mathbf{Good}_{n,t_h}$, it holds that 
\begin{align}\label{eq:include}
\frac{\widetilde{d}^n_{i,t_k}}{n} \geq (1-\vep) F_m(t_h,t_k)|I_{t_k}|, \quad k=1,\dotso, M.
\end{align}
\end{enumerate}

For vertices in $\mathbf{Good}_{n, t_h}$, $h=1,\dotso,M$, we label them with $t_h$. Let us define $$\mathbf{Good}_{n}:= \bigcup_{h=1}^M \mathbf{Good}_{n,t_h}, \quad \tilde{n}:=|\mathbf{Good}_{n}| .$$
Let $\widetilde{G}_{\tilde{n}}$ be a graph with vertices  $\mathbf{Good}_n$ such that $\widetilde{a}^n_{i,j}:=\tilde{n}a^n_{i,j} /n$ for all $i,j \in \mathbf{Good}_n$. 
It is clear that 
\begin{align}\label{eq:conclude1}
C_k\left(G_n\left(\frac{1}{n} \right) \right) \geq C_k\left(\widetilde{G}_{\tilde{n}}\left(\frac{1}{\tilde{n}} \right) \right) .
\end{align}

Take $\widetilde{X}^{\tilde{n}}$ to be a branching process sampled from $\widetilde{G}_{\tilde{n}}$. For any $i \in \mathbf{Good}_{n,t_h}$, take $\widetilde{X}^{\tilde{n}}_i$ to be a branching process sampled from graph $\widetilde{G}_{\tilde{n}}$ with root $i$. For any $t_h$, take $X^{(1-\vep)F_m}_{t_h}$ to be a branching process sampled from kernel $(1-\vep)F_m$ with root of label $t_h$. Suppose a particle in generation $t$ of $\widetilde{X}_i^{\tilde{n}}$ is of type $j$ with label $t_h$, as a result of \eqref{eq:include} the number of its $t_k$-labelled children has Poisson distribution with parameter $\widetilde{d}^n_{j,t_k}$ larger than $(1-\vep) F_m(t_h,t_k)|I_{t_k}|$. Therefore, for any $i \in \mathbf{Good}_{n,t_h}$, we can consider $X^{(1-\vep)F_m}_{t_h}$ as a subset of $\widetilde{X}_i^{\tilde{n}}$. Thus for any increasing event $\mathcal{I}$, we have that $\P_{\widetilde{X}_i^{\tilde{n}}}(\mathcal{I}) \geq \P_{X^{(1-\vep)F_m}_{t_h}}(\mathcal{I})$, and also
\begin{align}\label{eq:important}
\P_{\widetilde{X}^{\tilde{n}}}(\mathcal{I})& =\frac{1}{\tilde{n}} \sum\limits_{i \in \mathbf{Good}_{n}} \P_{\widetilde{X}_i^{\tilde{n}}}(\mathcal{I}) = \frac{1}{\tilde{n}} \sum\limits_{h=1}^M \sum\limits_{i \in \mathbf{Good}_{n,t_h}}\P_{\widetilde{X}_i^{\tilde{n}}}(\mathcal{I}) \\
& \geq \frac{1}{\tilde{n}} \sum\limits_{h=1}^M |\mathbf{Good}_{n,t_h}| \P_{X^{(1-\vep)F_m}_{t_h}}(\mathcal{I}) \geq (1-\vep) \sum\limits_{h=1}^M |I_{t_h}| \P_{X^{(1-\vep)F_m}_{t_h}}(\mathcal{I})=(1-\vep)\P_{X^{(1-\vep)F_m}}(\mathcal{I}), \notag
\end{align}
where the second inequality follows from \eqref{eq:include2}.

Now we apply Lemma~\ref{lem:riordan} to $(1-\vep)F_m$ and $\widetilde{G}_{\tilde{n}}$ to conclude that

\begin{align*}
C_k\left(G_n\left(\frac{1}{n} \right) \right)& \geq C_k\left(\widetilde{G}_{\tilde{n}}\left(\frac{1}{\tilde{n}} \right) \right)  \geq (1-\vep) \tilde{n} \P_{X^{(1-2\vep)F_m}}(\mathcal{A})+o_p(n) \\
& \geq (1-2\vep) n \P_{X^{(1-2\vep)F_m}}(\mathcal{A})+o_p(n).
\end{align*}

\end{proof}

\subsection{Proof of \eqref{eq:main2}}\label{subsec:main2}
Note that if $F_m$ converges to $W$ pointwise from below, by the dominated convergence theorem it can be easily seen that 
\begin{align*}
\lim\limits_{\vep \to 0, m \to \infty} d_{\square}((1-2\vep)F_m, W)=0. 
\end{align*}
Therefore it is sufficient to show that 
\begin{align*}
\lim\limits_{n \to \infty} \P_{X^{W_n}}(\mathcal{A}) \geq \P_{X^W}(\mathcal{A}) \text{ if $\lim\limits_{n \to \infty} d_{\square}(W_n, W)=0$},
\end{align*}
which we will prove in Proposition~\ref{prop:continuity}. 

We say a branching process has property $\mathcal{B}_d$ if its root has at least $k-1$ offsprings, each of these $k-1$ offsprings has at least $k-1$ offsprings, and this occurs up to generation $d$, and let $\mathcal{B}=\lim\limits_{d \to \infty} \mathcal{B}_d$. Define functions 
\begin{align*}
\Psi_{k}(\lambda):= \P( \text{Poi}(\lambda) \geq k).
\end{align*}
For any graphon $W$, define 
\begin{align}\label{eq:defbeta}
\beta_{W} (x,d):= \P( X^W \in \mathcal{B}_d \, | \, X_0=x ), \quad \beta_W(x):= \P(X^W \in \mathcal{B} \, | \, X_0=x).
\end{align}
For $W=W_n$, we simply denote $$\beta_n(x,d):=\beta_{W_n}(x,d), \quad \beta_n(x):=\beta_{W_n}(x).$$

 \begin{lem}\label{lem:jul111:559pm} 
 Let $(W_n)_{n \in \N}$ be a sequence of graphons such that $\|W_n -W\|_\square \to 0$. Suppose that $W$ is irreducible, and $\alpha: [0,1] \rightarrow [0,1]$ is a strictly positive measurable function, i.e., $\text{Leb}(\{x: \, \alpha(x)>0\})=1.$ Fix $\vep >0$ and $\delta>0$. For large enough $n$,  there exists a subset $\mathbf{Bad}_n \subset [0,1]$ such that $\text{Leb}(\mathbf{Bad}_n) \leq \delta$, and 
 \begin{equation}\label{eqn:jul103:39pm}
(1-\vep/2) \int \alpha(y) W(x, y)\, dy \leq  \int \alpha(y) W_n(x, y)\, dy 
 \end{equation}
 for all $x \in \mathbf{Bad}_n^c$. The choice of $\mathbf{Bad}_n$ depends on $W_n,W,\vep, \delta, \alpha$.
 \end{lem}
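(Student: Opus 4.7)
The plan is to take $\mathbf{Bad}_n$ to be precisely the set where inequality \eqref{eqn:jul103:39pm} fails,
$$\mathbf{Bad}_n := \left\{ x \in [0,1] : \int \alpha(y) W_n(x,y) \, dy < (1-\vep/2) \int \alpha(y) W(x,y) \, dy \right\},$$
so that \eqref{eqn:jul103:39pm} on $\mathbf{Bad}_n^c$ is automatic by construction, and only the measure bound $\text{Leb}(\mathbf{Bad}_n) \leq \delta$ remains to be shown. Throughout, I abbreviate $h(x) := \int \alpha(y) W(x,y) \, dy$.

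The key preliminary is the claim that $h>0$ a.e.\ on $[0,1]$, which is where both irreducibility of $W$ and strict positivity of $\alpha$ enter. Suppose for contradiction that $A := \{h=0\}$ had positive Lebesgue measure. Then $\alpha(y) W(x,y) = 0$ for a.e.\ $(x,y) \in A \times [0,1]$, and since $\alpha > 0$ a.e., this forces $W(x,y) = 0$ for a.e.\ $(x,y) \in A \times [0,1]$. In particular $W = 0$ a.e.\ on $A \times A^c$, contradicting irreducibility once $\text{Leb}(A) \in (0,1)$; and $\text{Leb}(A)=1$ would mean $W = 0$ a.e., again contradicting irreducibility. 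Hence $h > 0$ a.e., and for the prescribed $\delta$ one can choose $\eta = \eta(\delta)>0$ small enough that the sublevel set $\{h \leq \eta\}$ has Lebesgue measure strictly less than $\delta/2$.

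The main measure estimate then proceeds by contradiction: assume $\text{Leb}(\mathbf{Bad}_n) > \delta$, and set $E_\eta := \{h > \eta\}$, so that $\text{Leb}(\mathbf{Bad}_n \cap E_\eta) > \delta/2$. From the defining inequality of $\mathbf{Bad}_n$ and the lower bound $h > \eta$ on $E_\eta$,
\begin{align*}
\int_{\mathbf{Bad}_n \cap E_\eta} \int \alpha(y)\bigl(W_n(x,y) - W(x,y)\bigr)\, dy\, dx
& \leq -\tfrac{\vep}{2} \int_{\mathbf{Bad}_n \cap E_\eta} h(x)\, dx \\
& \leq -\tfrac{\vep \eta \delta}{4}.
\end{align*}
On the other hand, applying the cut-norm identity \eqref{eqn:jul9415pm} with $f(x) = \mathbbm{1}_{\mathbf{Bad}_n \cap E_\eta}(x)$ and $g(y) = \alpha(y)$ (both $[0,1]$-valued) shows that the same left-hand side has absolute value at most $\|W_n-W\|_\square$, which tends to $0$. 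For $n$ large enough that $\|W_n-W\|_\square < \vep \eta \delta / 4$, the two bounds contradict, yielding $\text{Leb}(\mathbf{Bad}_n) \leq \delta$.

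The only non-routine step is the positivity claim $h > 0$ a.e.; once that is in place, the rest is a direct cut-norm calculation. The roles of both hypotheses are clearly visible here: without irreducibility nothing prevents $h$ from vanishing on a large set (so no admissible choice of $\eta$ would exist), and without $\alpha > 0$ a.e.\ the implication from $h \equiv 0$ on $A$ to $W \equiv 0$ on $A \times [0,1]$ breaks down.
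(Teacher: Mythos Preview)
Your proof is correct and follows the same overall strategy as the paper: define $\mathbf{Bad}_n$ as the set where \eqref{eqn:jul103:39pm} fails, then use the cut-norm identity \eqref{eqn:jul9415pm} with $g=\alpha$ together with irreducibility to force $\text{Leb}(\mathbf{Bad}_n)$ to be small.

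The one point of difference is in how the final implication ``$\int_{\mathbf{Bad}_n} h(x)\,dx$ small $\Rightarrow$ $\text{Leb}(\mathbf{Bad}_n)$ small'' is handled. The paper bounds $\alpha(y)W(x,y)$ below by the symmetric product $\widetilde W(x,y)=\alpha(x)\alpha(y)W(x,y)$, observes that $\widetilde W$ is still irreducible, and then invokes \cite[Lemma~6]{MR2599196} as a black box. You instead argue this directly: first show $h>0$ a.e.\ from irreducibility of $W$ and strict positivity of $\alpha$, then pass to the superlevel set $E_\eta=\{h>\eta\}$ and derive an explicit quantitative contradiction $\|W_n-W\|_\square \geq \vep\eta\delta/4$. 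Your route is more self-contained and avoids the auxiliary kernel $\widetilde W$; the paper's route is shorter on the page but relies on an external reference that encapsulates essentially the same level-set reasoning.
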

 \begin{proof}
Take a subset 
\begin{align*}
\mathbf{Bad}_n:=\left\{x: \, (1-\vep/2)\int \alpha(y) W(x, y)\, dy > \int \alpha(y) W_n(x, y)\, dy  \right\}. 
\end{align*}
According to the definition of cut norm \eqref{eqn:jul9415pm}, we get that 
\begin{align*}
\lVert W-W_n \rVert_{\square} \geq & \int_{\mathbf{Bad}_n} \, dx\int \alpha(y) \left(W(x,y)-W_n(x,y) \right)  \, dy  \\
\geq & \frac{\vep}{2} \int_{\mathbf{Bad}_n} \, dx\int \alpha(y)  W(x,y) \, dy. 
\end{align*}
Note that $\widetilde{W}(x,y)=\alpha(x)\alpha(y)W(x,y)$ is still irreducible, and 
\begin{align*}
\lVert W-W_n \rVert_{\square} \geq \frac{\vep}{2}  \int_{\mathbf{Bad}_n} \, dx\int \alpha(y)  W(x,y) \, dy \geq  \frac{\vep}{2} \int_{\mathbf{Bad}_n} ) \, dx\int \widetilde{W}(x,y) \, dy.
\end{align*}
Therefore our claimed result directly follows from \cite[Lemma 6]{MR2599196} where the irreducibility of $\widetilde{W}$ is necessary.
\end{proof}

 \begin{lem}\label{lem:nonzero}
 Let $k \in \N$, and $W$ be an irreducible graphon  such that 
\begin{align}\label{eq:nonzerosolution}
\alpha(x) = \Psi_k\left(\int W(x,y) \alpha(y) \, dy\right)
 \end{align}
 has a non-zero solution  $\alpha(x)$, i.e., $\text{Leb}(\{x: \alpha(x)=0\})<1$. Then $\alpha$ is strictly positive, i.e., $\text{Leb}(\{x: \alpha(x)>0\})=1$.
 \end{lem}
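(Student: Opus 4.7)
The plan is to argue by contradiction using the structure of $\Psi_k$ and the irreducibility of $W$. Set $A := \{x \in [0,1] : \alpha(x) = 0\}$; the hypothesis that $\alpha$ is a non-zero solution gives $\text{Leb}(A) < 1$, and I want to prove $\text{Leb}(A) = 0$. Suppose for contradiction that $\text{Leb}(A) > 0$, so $0 < \text{Leb}(A) < 1$.

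The first step is a pointwise observation about $\Psi_k$. Since $k \geq 1$, we have $\Psi_k(\lambda) = \mathbb{P}(\text{Poi}(\lambda) \geq k) = 0$ iff $\lambda = 0$, because $\Psi_k$ is strictly increasing on $[0,\infty)$ with $\Psi_k(0)=0$. So for every $x \in A$, the defining equation \eqref{eq:nonzerosolution} forces
\begin{equation*}
\int_0^1 W(x,y)\,\alpha(y)\,dy = 0.
\end{equation*}
Since $W \geq 0$ and $\alpha \geq 0$, this means $W(x,y)\alpha(y) = 0$ for a.e.\ $y \in [0,1]$. Because $\alpha(y) > 0$ precisely on $A^c$, we conclude that for a.e.\ $x \in A$, $W(x,y) = 0$ for a.e.\ $y \in A^c$.

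By Fubini, this says $W = 0$ a.e.\ on $A \times A^c$. Since $W$ is symmetric, also $W = 0$ a.e.\ on $A^c \times A$. Given $\text{Leb}(A) \in (0,1)$, this directly contradicts the irreducibility of $W$ as defined at the start of Section~\ref{sec:446pm10dec20}. Therefore $\text{Leb}(A) = 0$, i.e., $\alpha > 0$ a.e., which is the desired conclusion.

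The argument is short, and there is no real obstacle: the only thing to be careful about is verifying the elementary fact that $\Psi_k$ vanishes only at $0$ (which is immediate from the series expansion of the Poisson tail) and then translating the pointwise vanishing of the integral $\int W(x,\cdot)\alpha$ into the set-theoretic statement $W \equiv 0$ a.e.\ on $A \times A^c$ needed to invoke irreducibility.
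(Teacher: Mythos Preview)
Your proof is correct and follows essentially the same approach as the paper: both define the zero set of $\alpha$, use that $\Psi_k(\lambda)=0$ forces $\lambda=0$ to conclude $\int W(x,\cdot)\alpha=0$ for $x$ in that set, and then deduce $W=0$ a.e.\ on $A\times A^c$ to contradict irreducibility. The only cosmetic difference is that the paper integrates first and then uses positivity of $\alpha$ on $A^c$, while you pass through the pointwise statement before invoking Fubini.
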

 \begin{proof}
 Suppose $\alpha$ is a non-zero solution of \eqref{eq:nonzerosolution}. Define $\mathcal{Z}:=\{x: \, \alpha(x)=0\}$. For any $x \in \mathcal{Z}$, we have that 
 \begin{align*}
 0= \Psi_k\left(\int W(x,y) \alpha(y) \, dy\right),
\end{align*}
 which can only happen if 
 \begin{align*}
 \int_{\mathcal{Z}^c} W(x,y) \alpha(y) \, dy =0.
 \end{align*}
 Integrating the above equation over $x \in \mathcal{Z}$, we obtain that 
 \begin{align*}
 \int_{\mathcal{Z}} \, dx \int_{\mathcal{Z}^c} W(x,y) \alpha(y) \, dy =0.
 \end{align*}
 Since $\alpha>0$ over $\mathcal{Z}^c$, it violates the irreducibility of $W$. 
 \end{proof}

\begin{prop}\label{prop:continuity}
Let $(W_n)_{n \in \mathbb{N}}$ be a sequence of graphons such that $d_{\square}(W_n, W) \rightarrow 0$ with irreducible $W$. For any $\vep>0$ we have that  
\begin{equation}\label{eqn:230pm11dec20}
  \P_{X^{W_n}} (\mathcal{A}) \geq \P_{X^{(1-\epsilon)W}}(\mathcal{A})-\vep^2,
  \end{equation}
  for large enough $n$.
 If make Assumption~\ref{assume2}, then we obtain
\begin{align*}
\liminf \limits_{n \to \infty} \P_{X^{W_n}}(\mathcal{A}) \geq \P_{X^W}(\mathcal{A}).
\end{align*}
\end{prop}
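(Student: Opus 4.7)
The strategy is to produce, for each $\epsilon>0$ and all sufficiently large $n$, an explicit function $\widetilde\alpha\le 1$ that lower-bounds $\beta_n:=\beta_{W_n}$ pointwise and whose $\Psi_k$-integral is close to $\P_{X^{(1-\epsilon)W}}(\mathcal{A})$. Set $\alpha:=\beta_{(1-\epsilon)W}$ and $h(x):=\int (1-\epsilon)W(x,y)\alpha(y)\,dy$, so the fixed-point identity for $\alpha$ reads $\alpha=\Psi_{k-1}(h)$, and correspondingly $\P_{X^{(1-\epsilon)W}}(\mathcal{A})=\int\Psi_k(h(x))\,dx$. If $\alpha\equiv 0$ a.e.\ then $\P_{X^{(1-\epsilon)W}}(\mathcal{A})=0$ and the first inequality is trivial, so assume otherwise; Lemma~\ref{lem:nonzero} applied to the irreducible graphon $(1-\epsilon)W$ forces $\alpha>0$ a.e., and the irreducibility of $W$ then forces $h>0$ a.e.\ (if $h=0$ on a positive-measure set $A$, then $W=0$ a.e.\ on $A\times\{\alpha>0\}\supset A\times A^c$, a contradiction).

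Choose $c>0$ small enough that $|\{h<c\}|$ is small, apply Lemma~\ref{lem:jul111:559pm} to $\alpha$, $W_n$, $W$ with parameters $\vep=\epsilon$ and $\delta>0$ small, and obtain for all large $n$ a bad set $\mathbf{Bad}_n$ with $|\mathbf{Bad}_n|\le\delta$ and
\begin{equation*}
\int W_n(x,y)\alpha(y)\,dy \;\ge\;(1-\epsilon/2)\int W(x,y)\alpha(y)\,dy \;=\; \tfrac{1-\epsilon/2}{1-\epsilon}\,h(x) \quad\text{for all } x\in\mathbf{Bad}_n^c.
\end{equation*}
Define $G^*:=\mathbf{Bad}_n^c\cap\{h\ge c\}$ and $\widetilde\alpha:=\alpha\,\mathbbm{1}_{G^*}$. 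I would then argue that $\widetilde\alpha$ is a subsolution of the monotone operator $T_n(g)(x):=\Psi_{k-1}\bigl(\int W_n(x,y)g(y)\,dy\bigr)$: the inequality $\widetilde\alpha(x)\le T_n(\widetilde\alpha)(x)$ is trivial for $x\notin G^*$, while for $x\in G^*$,
\begin{equation*}
\int W_n(x,y)\widetilde\alpha(y)\,dy \;\ge\; \int W_n(x,y)\alpha(y)\,dy - \bar a\,|(G^*)^c| \;\ge\; \tfrac{1-\epsilon/2}{1-\epsilon}h(x)-\bar a\,|(G^*)^c|,
\end{equation*}
using $\alpha\le 1$ and the uniform bound $W_n\le\bar a$ inherited from the enclosing theorem. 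The slack $\tfrac{\epsilon/2}{1-\epsilon}h(x)\ge\tfrac{\epsilon c}{2(1-\epsilon)}$ exceeds the truncation error $\bar a|(G^*)^c|$ provided $c,\delta$ are coordinated so that $\bar a|(G^*)^c|\ll\epsilon c$, giving $\int W_n\widetilde\alpha\,dy\ge h(x)$ and hence $T_n(\widetilde\alpha)(x)\ge\Psi_{k-1}(h(x))=\alpha(x)=\widetilde\alpha(x)$ on $G^*$. Since $\beta_n$ is the largest fixed point of $T_n$ in $[0,1]$, it dominates any subsolution, so $\beta_n\ge\widetilde\alpha$.

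Using $\beta_n\ge\widetilde\alpha$, the same computation yields $\int W_n(x,y)\beta_n(y)\,dy\ge h(x)$ on $G^*$, and therefore
\begin{equation*}
\P_{X^{W_n}}(\mathcal{A})=\int\Psi_k\!\Bigl(\int W_n(x,y)\beta_n(y)\,dy\Bigr)dx \;\ge\;\int_{G^*}\Psi_k(h(x))\,dx \;\ge\;\P_{X^{(1-\epsilon)W}}(\mathcal{A})-|(G^*)^c|,
\end{equation*}
so arranging $|(G^*)^c|\le\epsilon^2$ proves \eqref{eqn:230pm11dec20}. Under Assumption~\ref{assume2}, $\P_{X^{(1-\epsilon)W}}(\mathcal{A})\to\P_{X^W}(\mathcal{A})$ as $\epsilon\downarrow 0$, which gives $\liminf_n\P_{X^{W_n}}(\mathcal{A})\ge\P_{X^W}(\mathcal{A})$. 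The main obstacle is the parameter tuning in the subsolution step: the good set $G^*$ must be of measure close to $1$, must admit a uniform lower bound $h\ge c$, and the resulting truncation error must be absorbed by the slack from Lemma~\ref{lem:jul111:559pm}, all using only the qualitative fact $h>0$ a.e.\ and the uniform bound $\bar a$.
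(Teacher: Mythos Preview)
Your subsolution strategy is genuinely different from the paper's. The paper does not compare $\beta_n$ directly to a truncation of $\alpha=\beta_{(1-\epsilon)W}$; instead it proves by induction on the depth $d$ that $\beta_n(\cdot,d)\ge\beta_{(1-\epsilon)W}(\cdot,d)$ off a bad set of measure $\le\delta$, and only then lets $d\to\infty$. Your maximality argument (a subsolution of $T_n$ is dominated by the greatest fixed point $\beta_n$) is valid, and reducing everything to the choice of $c,\delta$ is clean.

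The gap is exactly where you locate it, and for some irreducible bounded $W$ it cannot be closed. Take $W(x,y)=Cxy$ with $\bar a=C$ large enough that $\alpha\not\equiv 0$. Then $h(x)=(1-\epsilon)CAx$ with $A=\int y\alpha(y)\,dy\in(0,\tfrac12]$, so $|\{h<c\}|=c/\bigl((1-\epsilon)CA\bigr)$ is \emph{exactly} linear in $c$. Your constraint $\bar a\,|(G^*)^c|\le\tfrac{\epsilon c}{2(1-\epsilon)}$ then forces $CA\ge 2\bar a/\epsilon=2C/\epsilon$, i.e.\ $A\ge 2/\epsilon$, impossible for any $\epsilon\le 4$. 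Thus no choice of $c>0$ makes $\widetilde\alpha=\alpha\mathbbm 1_{G^*}$ a subsolution of $T_n$: when $h$ vanishes linearly, the truncation error and the slack from Lemma~\ref{lem:jul111:559pm} scale identically in $c$ and cannot be separated.

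The paper's induction evades this circularity. At step $d$ the truncation error is $\bar a\,\delta'$, where $\delta'$ is the measure of the step-$(d{-}1)$ bad set, while the slack on a fixed good set is $\tfrac{\epsilon}{2}\int W(x,y)\beta_{(1-\epsilon)W}(y,d-1)\,dy\ge\tfrac{\epsilon}{2}\beta_{(1-\epsilon)W}(x)\ge\tfrac{\epsilon\xi}{2}$, with $\xi>0$ a lower bound for the \emph{limit} $\beta_{(1-\epsilon)W}$ off a small set (independent of $d$ and of $\delta'$). Because $\delta'$ is selected \emph{after} $\xi$, one may always take $\delta'\le\epsilon\xi/(2\bar a)$; the recursion $\delta\mapsto\delta'$ runs only $d$ times and so terminates. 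In your one-shot scheme the single threshold $c$ simultaneously controls the slack (of order $\epsilon c$) and the error (through $|\{h<c\}|$), leaving no free parameter to decouple them.
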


\begin{proof}
 We will only prove \eqref{eqn:230pm11dec20}, since the second statement follows from this directly. Due to the equality 
 $$\P_{X^{(1-\vep) W}}(\mathcal{A})=\int \Psi_{k}\left(\int (1-\vep) W(x,y) \beta_{(1-\vep) W}(y) \, dy \right) \, dx,$$
we assume that there exists an $\vep_0>0$ such that $\text{Leb}\{ x: \, \beta_{(1-\vep_0)W}(x)>0\}>0$. Otherwise there is nothing to prove.
 Since $\beta_{(1-\vep_0)W}(x)$ is a non-zero solution of 
 \begin{align*}
     \alpha(x)=\Psi_{k-1}\left(\int (1-\vep_0) W(x,y) \alpha(y) \, dy \right), 
 \end{align*}
$\beta_{(1-\vep_0)W}(x)$ is strictly positive according to Lemma~\ref{lem:nonzero}. Fix  any $\vep \in (0,\vep_0)$. We first prove the following statement by induction:  for each $d \geq 1$ and $\delta >0$, there exist subsets $\mathbf{Bad}_{n,d} \subset [0,1]$ for large enough $n$ such that $\text{Leb}(\mathbf{Bad}_{n,d})< \delta$ and 
\begin{align}\label{eq:beta}
\beta_n(x,d)\geq \beta_{(1-\vep)W}(x,d), \quad \text{for any $x \in \mathbf{Bad}^c_{n,d}$, $d \geq 1$}.
\end{align}

 Applying Lemma~\ref{lem:jul111:559pm} with $\alpha(y)=1, \, \forall y \in [0,1]$, we obtain some $\mathbf{Bad}_{n,1}$ with $\text{Leb}(\mathbf{Bad}_{n,1}) \leq \delta $ such that $x \in \mathbf{Bad}_{n,1}^c$ implies $\int W_n(x,y) \, dy \geq (1-\vep/2) \int W(x,y) \, dy$. It follows that
 $$\beta_n(x,1) =\Psi_{k-1}\left(\int W_n(x,y) \, dy\right) \geq \Psi_{k-1}\left((1-\vep/2)\int W(x,y) \, dy\right) $$
 
 $$
 \geq \Psi_{k-1}\left((1-\vep)\int W(x,y) \, dy\right) = \beta_{(1-\vep)W}(x,1).$$
Assume our assertion holds for $d-1$ and $\delta'>0$, where $\delta'$ is to be chosen later. Let us now prove our claim for $d$ and $\delta>0$. Noting that 
\begin{align*}
\beta_n(x,d) &= \Psi_{k-1}\left(\int W_n(x,y) \beta_n(y,d-1) \, dy\right),   \\
\beta_{(1-\vep)W}(x,d)& =\Psi_{k-1}\left(\int (1-\vep) W(x,y) \beta_{(1-\vep)W}(y,d-1) \, dy\right), 
\end{align*}
it is enough to show there exists $\mathbf{Bad}_{n,d}$ such that $x \in\mathbf{Bad}_{n,d}^c$ implies  
\begin{align}\label{eq:integralinduction}
\int W_n(x,y) \beta_n(y,d-1) \, dy \geq \int (1-\vep) W(x,y) \beta_{(1-\vep)W}(y,d-1) \, dy.
\end{align}
Applying Lemma \ref{lem:jul111:559pm} with $\alpha(y)=\beta_{(1-\vep)W}(y,d-1)>0$, we obtain for large enough $n$ a subset $\widetilde{\mathbf{Bad}}_{n,d}$  with $\text{Leb}(\widetilde{\mathbf{Bad}}_{n,d}) \leq \delta/2$ such that $x \in \widetilde{\mathbf{Bad}}^c_{n,d}$ implies that 
\begin{align*}
    \int W_n(x,y)\beta_{(1-{\vep})W}(y,d-1) \, dy \geq  (1-\vep/2) \int W(x,y)\beta_{(1-\vep)W}(y,d-1) \, dy.
\end{align*}
Since $\beta_{(1-\vep)W}(x)$ is strictly positive, 
we get that
\begin{align*}
\lim\limits_{\xi \searrow 0} \text{Leb}\left(\left\{x: \,\beta_{(1-\vep)W}(x) \geq \xi  \right\} \right)=1. 
\end{align*}
Therefore, there exists a subset $\mathbf{Bad} \subset [0,1]$ and a positive constant $\xi$ such that $\text{Leb} (\mathbf{Bad}) \leq \delta/4$ and for any $x \in \mathbf{Bad}^c$
\begin{align*}
 \int W(x,y) \beta_{(1-\vep)W}(y, d-1)\,dy \geq \beta_{(1-\vep)W}(x) \geq \xi.
\end{align*}
By induction, it follows that for $x \in \mathbf{Bad}_{n,d}^c:=\mathbf{Bad}_{n,d-1}^c \cap \widetilde{\mathbf{Bad}}_{n,d}^c \cap \mathbf{Bad}^c$
\begin{align*}
\int W_n(x,y) \beta_n(y,d-1) \, dy  & \geq \int_{y \in \mathbf{Bad}_{n,d-1}^c} W_n(x,y)\beta_{(1-{\vep})W}(y,d-1) \, dy \\
& \geq \int W_n(x,y)\beta_{(1-\vep)W}(y,d-1) \, dy - \delta' \bar{a} \\
&  \geq (1-\vep/2) \int W(x,y)\beta_{(1-\vep)W}(y,d-1) \, dy - \delta' \bar{a},
\end{align*} 
where the first two inequalities are due to our induction hypothesis and the third inequality follows from our choice of $\widetilde{\mathbf{Bad}}_{n,d}$. Choosing a small enough $\delta' \in (0,\delta/4)$ such that $\delta ' \bar{a} \leq \frac{\vep \xi}{2}$, it is clear that  \eqref{eq:integralinduction} holds for $x \in \mathbf{Bad}_{n,d}^c$ with $\text{Leb}(\mathbf{Bad}_{n,d}) \leq \delta$.

Now we prove that $ \P_{X^{W_n}} (\mathcal{A}) \geq \P_{X^{(1-\epsilon)W}}(\mathcal{A})-\vep^2$. Note that
$$
\P_{X^{W_n}}(\mathcal{A}_d) = \int \Psi_k\left(\int W_n(x,y) \beta_n(y,d-1) \, dy \right)\, dx,
$$
and
$$
\P_{X^{(1-\vep)W}}(\mathcal{A}_d) = \int \Psi_k\left(\int (1-\vep)W(x,y) \beta_{(1-\vep)W}(y,d-1) \, dy \right) \, dx.
$$
Choosing $\delta =\vep^2$, there exists for large enough $n$ a subset $\mathbf{Bad}_{n,d}$ with $\text{Leb}(\mathbf{Bad}_{n,d})\leq \delta$ such that $x \in \mathbf{Bad}_{n,d}^c$ implies
\begin{align*}
    \Psi_k\left(\int W_n(x,y) \beta_n(y,d-1) \, dy \right) \geq \Psi_k\left(\int (1-\vep)W(x,y) \beta_{(1-\vep)W}(y,d-1) \, dy \right).
\end{align*}
Due to $\Psi_k(x) \leq 1$ for all $x \geq 0$, it can be easily verified that
\begin{align*}
  \P_{X^{W_n}}(\mathcal{A}_d) \geq \P_{X^{(1-\vep)W}}(\mathcal{A}_d)-\vep^2. 
\end{align*}
Letting $d \to \infty$ in the above inequality, we conclude the result.
\end{proof}

\subsection*{Acknowledgment}We thank Remco van der Hofstad for his comments that connects our work to the emergence of a giant $k$-core, which lead to Remark \ref{rmk:351pm16dec20}. 
E. B. is partially supported by the National Science Foundation under grant DMS-2106556 and by the
Susan M. Smith chair.
S.C. is partially supported by the Netherlands Organisation for Scientific Research (NWO) through Gravitation-grant NETWORKS-024.002.003.

\bibliographystyle{siam}
\bibliography{kcore_densegraph.bib}
\end{document}